\newtheorem{theorem}{Theorem}[]
\newtheorem{lemma}{Lemma}[]
\newtheorem{corollary}{Corollary}[]
\newtheorem{observation}{Observation}[]
\newtheorem{definition}{Definition}
\newtheorem{conjecture}{Conjecture}[]
\newcommand{\N}{\mathbb{N}}
\renewcommand{\emptyset}{\varnothing}
\newcommand{\cutvx}[1]{\ensuremath{V_\mathrm{cut}}}
\newcommand{\cutes}{\ensuremath{E_\mathrm{cut}}}
\newcommand{\eant}{\ensuremath{E_\mathrm{ant}}}
\newcommand{\bct}[1]{\ensuremath\mathrm{BCT}(#1)}
\newcommand{\refp}[1]{(\ref{#1})}
\DeclareMathOperator{\opp}{antip}
\title{A Polynomial Time Algorithm for Computing the Strong Rainbow Connection Numbers of Odd Cacti}
\author{{\Large Logan A. Smith, David T. Mildebrath, and Illya V. Hicks}}
\date{\today}
\affil{
    \textit{Department of Computational and Applied Mathematics}\\
    \textit{Rice University}\\
    \textit{Houston, TX 77005}}
\begin{document}

\maketitle

\begin{abstract}\noindent
We consider the problem of computing the strong rainbow connection number $src(G)$ for cactus graphs $G$ in which all cycles have odd length. We present a formula to calculate $src(G)$ for such odd cacti which can be evaluated in linear time, as well as an algorithm for computing the corresponding optimal strong rainbow edge coloring, with polynomial worst case run time complexity. Although computing $src(G)$ is NP-hard in general, previous work has demonstrated that it may be computed in polynomial time for certain classes of graphs, including cycles, trees and block clique graphs. This work extends the class of graphs for which $src(G)$ may be computed in polynomial time.
\end{abstract}

\section{Introduction}

Let $G$ be a non-empty simple connected graph, and let $c:E(G)\to\{1,\dots,k\}$ for $k\in\N$ be a $k$-coloring of the edges of $G$ (note that $c$ is not necessarily proper, so that adjacent edges may be the same color). The graph $G$ is (strongly) rainbow connected with respect to $c$ if, for every pair of vertices $u,v\in{V(G)}$, there exists a (shortest) $u,v$ path $P$ in $G$ such that no two edges in $E(P)$ are the same color. The (strong) rainbow connection number $rc(G)$ ($src(G)$) is the minimum number of colors $k$ for which there exists a (strong) rainbow $k$-coloring of $G$. In general, we have that $rc(G)\leq{src(G)}$.

The concept of (strong) rainbow connection was first introduced by \citet{chartrand2008}, and was originally intended to model the flow of classified information between government agencies in the aftermath of the terrorist attacks of September 11, 2001 \citep{chartrand2009}. It has since been applied in other areas, including the routing of information over secure computer networks (i.e.~``onion routing'' \citep{reed1998}). In addition to these applications, rainbow connection is of theoretical interest, and has recently garnered significant attention (see \citet{li2013a} for a review).

In this work, we focus on computing the strong rainbow connection number $src(G)$ for \textit{odd cactus} graphs---that is, cactus graphs which do not contain a cycle of even length (a graph $G$ is a cactus if every edge in $E(G)$ is contained in at most one cycle in $G$).
We emphasize that this is distinct from the closely related rainbow connectivity problem which, given an edge coloring $c$, asks whether $c$ strongly rainbow connects $G$. For general graphs $G$, determining whether $src(G)\leq{k}$ is NP-hard for $k\geq3$, even when $G$ is bipartite \citep{ananth2011} (the same is true of $rc(G)$ \citep{chakraborty2009}). However, for certain classes of graphs, $src(G)$ may be computed in polynomial time. These include: trees, cycles, wheels, complete multipartite graphs \citep{chartrand2008}, fan and sun graphs \citep{sy2013}, stellar graphs \citep{shulhany2016} and block clique graphs \citep{keranen2018}.
To our knowledge, no polynomial time algorithm is known for computing $src(G)$ (or indeed $rc(G)$) in cactus graphs. In this work, we provide a formula to compute $src(G)$ for odd cacti $G$ which can be evaluated in $O(n)$ time.



Cacti have previously been considered in the context of rainbow coloring. In particular, \citet{uchizawa2013} show that, given a fixed edge coloring $c$ of a cactus $G$, determining whether $c$ strongly rainbow connects $G$ can be done in polynomial time (although this problem is NP-complete on general graphs, including interval outerplanar and $k$-regular graphs \citep{lauri2016}). A particularly useful property of \textit{odd} cacti which is not shared by other cacti, is that they are \textit{geodetic}---i.e., every pair of vertices in the graph is connected by a unique shortest path \citep{stemple1968}.

Of the work studying (strong) rainbow connection in structured graphs, we highlight two particular results related to our own. The first is the work of
\citet{alvasamos2017}, who present a method for computing a variant of $rc(G)$ for directed cacti. In the class of graphs considered by the authors, each pair of vertices is connected by a unique (directed) path, and thus $src(G)=rc(G)$. This identity does not hold for the standard $src(G)$ introduced by \citet{chartrand2008}, which we consider here. More background on the directed variant of rainbow connection can be found in \citet{dorbec2014}.

The second work we highlight is that of \citet{keranen2018}, who present a linear time algorithm for computing $src(G)$ when $G$ is a block clique graph (i.e.~a graph whose blocks are all cliques). A block clique graph in which each block contains at most 3 vertices is also an odd cactus, and thus our results coincide with the results of \citet{keranen2018} for such graphs. However, block clique graphs are a special class of chordal graphs. Odd cacti are not chordal in general, and thus many of the techniques used by \citet{keranen2018} do not extend to the graphs we consider here.


Our main result is a simple formula for $src(G)$ when $G$ is an odd cactus. This formula relies on the notion of an antipodal vertex-edge pair.  We formalize this notion below, but intuitively, given an edge $u_1u_2$ in an odd cycle $C$, the antipodal vertex to $u_1u_2$ is the unique vertex $v:=\opp(u_1u_2)$ in $V(C)$ which is the same distance from both $u_1$ and $u_2$ (i.e.~on the opposite side of the cycle from $u_1u_2$).
With this concept in hand, we prove that for any odd cactus $G$, 
\begin{equation}\label{eqn:main-formula}\tag{M}
    src(G)=\begin{cases}
        \tfrac{1}{2}\big(m+|\cutes|+|\mathcal{S}_1|-|\eant|\big),&G\text{ is not an odd cycle},\\
        (n+1)/2,&G=C_n\text{ for }n\geq5,\\
        1&G=C_3.
        \end{cases}
\end{equation}
where $m$ is the number of edges contained in $G$; $n$ in the number of vetices contained in $G$; $\cutes$ is the set of cut edges in $G$; $\eant$ is the the set of edges in $E(G)$ whose antipodal vertices are cut vertices; and $|\mathcal{S}_1|$ is equal to the number of pairs of cut vertices $(u,v)$ such that (1) $u$ and $v$ are contained in the same cycle in $G$, and (2) the shortest path between $u$ and $v$ contains no other cut vertices, and no edges in $\eant$. 



We prove the correctness of \refp{eqn:main-formula} according to the following outline. First, we introduce the notion of a \textit{black white partition} of $G$, in which every vertex and edge in $G$ is assigned to be either ``black'' or ``white''. The key property of black white partitions is that for every pair of black edges in a valid black white partition of $G$, there exists a shortest path between a pair of vertices in $V(G)$ which traverses both black edges. Since odd cacti are geodetic, in any strong rainbow coloring of $G$ each black edge must assigned a distinct color, and thus the number of black edges in any valid black white partition provides a lower bound on $src(G)$ (\autoref{thm:bwcoloring}). We next introduce the concept of a \textit{segment} of a cycle in $G$, and use this idea to construct a particular black white partition of $G$ (\autoref{thm:bw}). Using this particular black white partition, we introduce a polynomial time algorithm which produces a strong rainbow coloring of $G$ (\autoref{thm:src}). The coloring produced by our algorithm satisfies the lower bound of \autoref{thm:bwcoloring} with equality, and is thus optimal. The formula \refp{eqn:main-formula} follows as a corollary from these three primary results (\autoref{cor:formula}).


\section{Notation and Preliminaries}

Let $G$ be a graph with vertex set $V(G)$ and edge set $E(G)$. A subgraph of $G$ is called a {\em component} of $G$ if it is a maximal connected subgraph of $G$. If a graph has a single component it is called connected. For the remainder of the paper, we assume that all graphs are simple, connected, and non-empty. A vertex $v$ is called a {\em cut vertex} if $G - v$ has strictly more distinct components than $G$. The set of cut vertices of $G$ is denoted as $\cutvx{G}$. A maximal connected subgraph $B$ of $G$ such that no vertex can be removed from $B$ to disconnect it is called a {\em block} of $G$, and the set of blocks within $G$ is denoted as $\mathcal{B}(G)$. A graph known as the {\em block-cut tree} $\bct{G}$ of $G$ can be constructed such that $V(\bct{G}) = \cutvx{G} \cup \mathcal{B}(G)$ and two vertices in $\bct{G}$ are adjacent if and only if one vertex corresponds to a block in $G$ and the other corresponds to a cut vertex in $G$ contained in that block. 
A graph $G$ is called a {\em cactus} graph, or simply a cactus, if each edge in $E(G)$ is contained in the edge set of at most one cycle contained in $G$. Equivalently, a graph is a cactus if every block in $G$ is either a cycle or the graph consisting of two vertices joined by an edge. Moreover, we call a cactus graph $G$ {\em odd} if $G$ contains no cycles of even length. Note that if $G$ is an odd cactus, then for any pair of vertices $u,v$ in $V(G)$ there is a unique shortest $u,v$ path contained in $G$. 

\begin{lemma}[\citet{chartrand2008}]
$src(C_3) = 1$ and for any odd $n \geq 5$, $src(C_n) = \frac{n+1}{2}$.
\end{lemma}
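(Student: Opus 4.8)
The plan is to treat the exceptional case $n = 3$ separately, then reformulate the strong rainbow condition on $C_n$ as a statement about windows of consecutive edges, and finally prove matching lower and upper bounds. For $C_3$ every two vertices are adjacent, so the unique shortest path joining any pair is a single edge, which is rainbow under \emph{any} edge coloring; since $C_3$ is non-empty, one color is both necessary and sufficient, giving $src(C_3) = 1$. For the remaining cases, fix odd $n = 2k + 1 \ge 5$ and write $C_n$ as $v_0 v_1 \cdots v_{2k} v_0$ with $e_i = v_i v_{i+1}$, indices taken mod $n$. The first step is the observation that, since $n$ is odd, $\operatorname{diam}(C_n) = k$ and every pair of vertices is joined by a unique shortest path, which is a block of $d \le k$ cyclically consecutive edges; conversely, every block of at most $k$ cyclically consecutive edges is the unique shortest path between its endpoints. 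Consequently, an edge coloring $c$ strongly rainbow connects $C_n$ if and only if every window of $k$ cyclically consecutive edges receives $k$ distinct colors. I expect this reformulation to carry most of the conceptual weight; both bounds follow from it.

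For the lower bound, I would argue as follows. Suppose $c$ uses $t$ colors. Each window $W_i := \{e_i, e_{i+1}, \ldots, e_{i+k-1}\}$ must be rainbow, so $t \ge k$. If $t = k$, then each $W_i$ uses every one of the $k$ colors exactly once; comparing $W_i$ with $W_{i+1}$, which agree on the $k - 1$ edges $e_{i+1}, \ldots, e_{i+k-1}$, forces $c(e_{i+k}) = c(e_i)$ for all $i$. Iterating gives $c(e_i) = c(e_{i + jk})$ for every $j$, and since $(2k+1) - 2k = 1$ we have $\gcd(k, n) = 1$, so the shift $i \mapsto i + k$ reaches every edge index and $c$ is constant. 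But a window of size $k \ge 2$ cannot be rainbow under a constant coloring, a contradiction; hence $t \ge k + 1$ and $src(C_n) \ge (n+1)/2$.

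For the upper bound, I would exhibit the coloring $c(e_i) := 1 + (i \bmod (k+1))$, which uses exactly the colors $\{1, 2, \ldots, k+1\}$, and verify via a short case analysis — on the position of a length-$k$ window relative to the indices $0$ and $k$, with the wrap-around window through $e_{2k}$ and $e_0$ handled separately — that each such window takes the colors of a $k$-element subset of $\{1, \ldots, k+1\}$, hence omits exactly one color and is rainbow. By the reformulation, this is a strong rainbow coloring with $k + 1 = (n+1)/2$ colors, matching the lower bound, so $src(C_n) = (n+1)/2$.

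The main obstacle, such as it is, lies in the bookkeeping in the wrap-around cases of the upper-bound verification; the lower bound additionally uses the elementary fact $\gcd(k, 2k+1) = 1$, and the reformulation relies on the geodetic property of odd cycles already recorded in the preliminaries.
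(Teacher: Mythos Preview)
Your proof is correct. The paper does not prove this lemma at all: it simply records the result with a citation to Chartrand et al.\ (2008) and moves on, so there is no in-paper argument to compare against. Your window reformulation is sound (odd cycles are geodetic, so the unique shortest paths are exactly the blocks of at most $k$ cyclically consecutive edges), the $\gcd(k,2k+1)=1$ argument cleanly rules out a $k$-coloring, and the periodic coloring $c(e_i)=1+(i\bmod(k+1))$ does make every length-$k$ window rainbow --- the wrap-around windows $W_j$ for $k+2\le j\le 2k$ each realise the color set $\{1,\dots,k\}$, omitting only $k+1$, as your case analysis would confirm.
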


Additionally, we note that while odd length cycles are odd cacti, a graph can be identified as an odd length cycle in worst case linear time complexity and the strong rainbow connection numbers of cycles are well known. Since these cases can be easily dealt with, we will often consider odd cacti which are distinct from cycles. \newline

\begin{observation}[\citet{li2013b}] \label{lem:cut-edges}
Let $G$ be a graph with cut edges $e_1, e_2$. Then for any strong rainbow coloring $c$ of $E(G)$,  $c(e_1) \neq c(e_2)$.
\end{observation}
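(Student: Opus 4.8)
The plan is to reduce the statement to a purely structural claim: for any two distinct cut edges $e_1$ and $e_2$ of $G$, there exists a pair of vertices $u,v\in V(G)$ such that \emph{every} $u$--$v$ path in $G$ traverses both $e_1$ and $e_2$. Granting this, the result is immediate. By definition of a strong rainbow coloring, $c$ guarantees a shortest $u$--$v$ path $P$ in which no two edges share a color; since $P$ is in particular a $u$--$v$ path, it contains both $e_1$ and $e_2$, and rainbow-ness of $P$ forces $c(e_1)\neq c(e_2)$.

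To establish the structural claim I would use the standard characterization of a cut edge (bridge) as an edge lying on no cycle, equivalently an edge whose removal strictly increases the number of components. Write $e_1 = x_1y_1$; since $e_1$ is a bridge, $G - e_1$ has exactly two components, say $X\ni x_1$ and $Y\ni y_1$. The edge $e_2$, being distinct from $e_1$, lies entirely within one of these components --- say within $X$ --- and $e_2$ remains a bridge of the induced subgraph on $X$. Removing $e_2$ from $X$ splits it into two pieces; let $Z$ be the piece not containing $x_1$, and let $w$ be the endpoint of $e_2$ lying in $Z$. I then claim $u := w$ and $v := y_1$ work. Indeed, $w\in X$ and $y_1\in Y$ lie in different components of $G - e_1$, so every $w$--$y_1$ path uses $e_1$. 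Likewise, in $G - e_2$ the set $Z$ is separated from the rest of $X$; and since $Y$ attaches to $X$ only through the edge $e_1$ at the vertex $x_1\notin Z$, the vertices $w$ and $y_1$ also lie in different components of $G - e_2$, so every $w$--$y_1$ path uses $e_2$ as well. Hence every $w$--$y_1$ path, in particular the rainbow shortest path supplied by $c$, contains both $e_1$ and $e_2$.

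I do not expect a serious obstacle here; the only care required is the bookkeeping of selecting the correct endpoint $w$ of $e_2$ --- namely the one on the far side of $e_2$ from $x_1$ --- so that the two separation arguments hold simultaneously, together with the observation that a bridge of $G$ contained in a component of $G - e_1$ is again a bridge of that component. Note that the argument nowhere uses that $G$ is a cactus, matching the generality of the stated observation.
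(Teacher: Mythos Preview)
The paper does not supply a proof of this observation; it is stated with a citation to \citet{li2013b} and used as a black box. Your argument is correct and self-contained: the structural claim that some pair of vertices is separated by each of the two bridges is exactly what is needed, and your choice of $w$ (the endpoint of $e_2$ on the side of $X-e_2$ away from $x_1$) together with $y_1$ works for both separations. The only step worth making explicit in a final write-up is why $e_2$ remains a bridge of $G[X]$---but this is immediate, since a cycle through $e_2$ in $G[X]$ would also be a cycle through $e_2$ in $G$. Since the paper offers no proof to compare against, there is nothing further to say about approach; your argument is the standard one and, as you note, does not require $G$ to be a cactus.
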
 

\begin{definition}
Let $C$ be an odd cycle, and $v_1, v_2, v_3$ be vertices in $V(C)$. If $d(v_1, v_2) = d(v_1, v_3)$ and $v_2 v_3 \in E(C)$, then vertex $v_1$ is antipodal to edge $v_2v_3$. Additionally, define $\opp(v_2v_3, C) = v_1$ and $\opp(v_1, C) = v_2 v_3$.
\end{definition}

We note that in the case of an odd cactus $G$, any edge $e\in{E(G)}$ can be contained in the edge set of at most one cycle $C$ contained in $G$. Thus when considering odd cacti, $\opp(e, C)$ can be denoted as $\opp(e)$ without ambiguity. Additionally, in the case that an edge $e$ is not contained in the edge set of any odd cycle contained in $G$, let $\opp(e) = \emptyset$. Similarly, if $C$ is a cycle contained in $G$ and $v \not \in V(C)$, then let $\opp(v, C) = \emptyset$. 

\section{Cycles and Cycle Segments in Odd Cacti}

Let $G$ be a  cactus. Since formulae for $src(G)$ are known in the case that $G$ is either cycle or a tree, assume that $G$ is not a cycle, but does contain some cycle $C$. Additionally, as $G$ is assumed to be connected and each block in a cactus is either a cycle or a pair of vertices sharing an edge, some vertex $v$ in $V(C)$ must also be contained in the vertex set of a block $B$ in $G$, distinct from $C$. Then vertex $v$ is contained in the vertex sets of two or more blocks in $G$ so $v$ must be a cut vertex in $G$. As surmised in the following observation, each cycle in $G$ must then contain a cut vertex. 

\begin{observation}\label{obs:cut-boyes}
Let $G$ be a cactus that is not a cycle. For any cycle $C$ contained in $G$, at least one vertex in $V(C)$ is a cut vertex in $G$.  
\end{observation}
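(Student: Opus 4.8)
The plan is to argue directly from the block decomposition of $G$, formalizing the remarks made just before the statement. The first step is to observe that the cycle $C$ is itself a block of $G$: since $G$ is a cactus, every block of $G$ is either a cycle or a single edge, and $C$ --- having at least three vertices --- is contained in some block $B$ that must therefore be a cycle; since no cycle is a proper subgraph of another cycle, we get $B = C$.

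The second step is to locate a vertex of $C$ that also lies in a different block. Because $G$ is connected and is not equal to $C$, it contains an edge outside $E(C)$; such an edge cannot be a chord of $C$, since a chord would be contained in two distinct cycles of $G$, contradicting the cactus property. Hence in fact $V(G) \supsetneq V(C)$. Fix a vertex $w \in V(G) \setminus V(C)$ and take a path $P$ from $w$ to a vertex of $C$; let $v$ be the first vertex of $P$ (starting from $w$) lying in $V(C)$, so that the edge $uv$ of $P$ immediately preceding $v$ satisfies $u \notin V(C)$. This edge $uv$ is contained in some block $B'$, and $B' \neq C$ because $u \in V(B') \setminus V(C)$.

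The final step is routine: the vertex $v$ lies in $V(C) \cap V(B')$ with $C$ and $B'$ distinct blocks, so $v$ is contained in at least two blocks of $G$; since any vertex contained in two or more blocks is a cut vertex, we conclude $v \in \cutvx{G}$, as desired. I do not anticipate any genuine obstacle here; the only points that merit an explicit word of justification are that $C$ is a block and that the cactus hypothesis forces $V(G) \supsetneq V(C)$ (rather than merely an extra edge inside $V(C)$), and both follow immediately as indicated above.
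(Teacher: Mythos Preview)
Your argument is correct and follows essentially the same route as the paper: identify $C$ as a block, find a vertex of $C$ that also lies in a second block, and conclude it is a cut vertex. The paper's version is the brief paragraph immediately preceding the observation, which simply asserts (without the path argument) that connectivity together with $G\neq C$ forces some $v\in V(C)$ to lie in a block distinct from $C$; you have supplied the details the paper leaves implicit, namely why $C$ itself is a block and why the extra edge cannot be a chord of $C$.
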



Now, let $\eant=\{e\in{E(G)}:\opp(e)\text{ is a cut vertex in }G\}$. Because all cactus graphs are outerplanar, we may fix a plane embedding of $G$---for the remainder of the paper, the notion of moving ``clockwise'' and ``counterclockwise'' in a cycle is defined relative to this embedding.
Let $C$ be a cycle in $G$ and let $v_1\in{V(C)}$ be a cut vertex in $G$ (the existence of $v_1$ follows from \autoref{obs:cut-boyes}).

For the remainder of the paper, we will be interested in various alternating sequences of vertices and edges. These sequences may begin with vertices \textit{or} edges. For simplicity, we refer to such sequences generically using the notation $x_ix_{i+1}\dots{x_k}$, where the $x_i$ may be either a vertex or an edge. We emphasize that these sequences will always alternate, so that if $x_i$ is a vertex, then $x_{i+1}$ is an edge (and vice versa).

Let the sequence $W=x_1x_2\dots{x_{2n(C)+1}}$ be the closed trail in cycle $C$ beginning and ending at vertex $v_1$ moving clockwise (note that $x_1=x_{2n(C)+1}=v_1$). A \textit{cycle segment} of the cycle $C$ is a subsequence of $W$ of the form $S=x_jx_{j+1}\dots{x_{j+k}}$ for $j\in\{2,\dots,2n(C)-k\}$ such that $x_{j-1},x_{j+k+1}\in\cutvx{G}\cup\eant$ and $x_{j+\ell}\not\in\cutvx{G}\cup\eant$ for all $\ell\in\{0,\dots,k\}$. Given a cycle segment $S$, let $V(S)$ denote the set of vertices contained in $S$, and let $E(S)$ denote the set of edges contained in $S$.

An example of a set of cycle segments is shown in \autoref{fig:segments}. Note that the cycle segments in a cycle $C$ are the same regardless of the choice of initial cut vertex $v_1$.

\begin{observation}
Let $G$ be a cactus that is not a cycle. 
\begin{enumerate}
    \item If $C$ is a cycle in $G$, then $V(C)$ is partitioned by the vertex sets of the cycle segments of $C$ and $\cutvx{G} \cap V(C)$. Furthermore, $E(C)$ is partitioned by the edge sets of the cycle segments of $C$ and $\eant\cap E(C)$.
    \item $V(G)$ is partitioned by the vertex sets of cycle segments in $G$, $\cutvx{G}$, and the set of leaves in $G$. $E(G)$ is partitioned by the edge sets of cycle segments in $G$, $\eant$ and $\cutes$. 
\end{enumerate}
\end{observation}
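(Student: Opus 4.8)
The plan is to prove part~1 directly from the definition of a cycle segment, and then obtain part~2 from part~1 together with the fact that every block of a cactus is either a cycle or a single edge.

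For part~1, fix a cycle $C$ in $G$. By \autoref{obs:cut-boyes}, $C$ contains a cut vertex $v_1$, so the closed trail $W = x_1 x_2 \cdots x_{2n(C)+1}$ around $C$ that begins and ends at $v_1$ is defined, and its $2n(C)$ distinct terms $x_1,\dots,x_{2n(C)}$ are exactly the elements of $V(C)\cup E(C)$---the odd-indexed terms are the vertices of $C$ and the even-indexed terms are the edges of $C$, each occurring once. Since $x_1 = x_{2n(C)+1} = v_1 \in \cutvx{G}$, both ends of $W$ lie in $\cutvx{G}\cup\eant$, so the cycle segments of $C$ are precisely the maximal runs of consecutive terms of $W$ that avoid $\cutvx{G}\cup\eant$; such runs are automatically pairwise disjoint, and their union is $(V(C)\cup E(C))\setminus(\cutvx{G}\cup\eant)$. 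Splitting this set into its vertices and its edges, and using that $\eant$ contains no vertex while $\cutvx{G}$ contains no edge, yields $V(C) = (\cutvx{G}\cap V(C))\sqcup\bigsqcup_S V(S)$ and $E(C) = (\eant\cap E(C))\sqcup\bigsqcup_S E(S)$, where $S$ ranges over the cycle segments of $C$; this is part~1. (The excerpt already notes that the collection of cycle segments of $C$ is independent of the choice of $v_1$ and orientation, so the resulting partition is the same for every such choice.)

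For part~2, recall that in a cactus each edge lies in exactly one block and each non-cut vertex lies in exactly one block, and every block is a cycle or a $K_2$. Classify an edge $e$ by its block: if the block is a $K_2$, then $e$ lies in no cycle, so $e\in\cutes$; if the block is a cycle $C$, then part~1 assigns $e$ either to $\eant\cap E(C)$ or to $E(S)$ for a unique cycle segment $S$ of $C$. Classify a vertex $v$: if $v\in\cutvx{G}$ there is nothing to do; otherwise its unique block is a $K_2$, which forces $\deg_G(v)=1$ so that $v$ is a leaf, or it is a cycle $C$, in which case part~1 assigns $v$ to $V(S)$ for a unique cycle segment $S$ of $C$. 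These assignments are exhaustive by construction. They are also disjoint: cut edges and leaves lie in no cycle, whereas every edge of $\eant$ and every cycle-segment element lies in a cycle; cut vertices lie in no cycle segment; and since two distinct cycles of a cactus meet in at most a cut vertex, each non-cut vertex and each edge belongs to at most one cycle, hence to at most one cycle segment. This gives the vertex and edge partitions of part~2.

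I do not anticipate a substantive obstacle; the only care required is seam bookkeeping. Concretely, one must verify that the cut-vertex endpoint $v_1 = x_1 = x_{2n(C)+1}$ of $W$ terminates the first and last runs correctly, so that ``maximal run of terms of $W$ avoiding $\cutvx{G}\cup\eant$'' really does coincide with the indexed definition of a cycle segment (which demands $j\ge 2$, $j+k\le 2n(C)$, and two bounding terms in $\cutvx{G}\cup\eant$); and one must verify that no element is simultaneously a cut edge, a member of $\eant$, and a cycle-segment element---both points reducing to the elementary fact that $\cutes$ is exactly the set of edges of $G$ that lie in no cycle.
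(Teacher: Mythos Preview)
The paper states this result as an \emph{observation} and provides no proof; it is treated as an immediate consequence of the definition of cycle segments and the block structure of cacti. Your argument is correct and supplies exactly the verification one would expect: for part~1 you unwind the definition of a cycle segment as a maximal run in $W$ bounded by elements of $\cutvx{G}\cup\eant$ (using that $x_1=x_{2n(C)+1}\in\cutvx{G}$ to handle the seam), and for part~2 you invoke the standard block decomposition of a cactus to reduce to part~1 on each cycle block and to $\cutes$/leaves on each $K_2$ block. There is nothing to compare approaches against, and no gap in what you wrote.
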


We categorize a cycle segment $S=x_j\dots{x_{j+k}}$ based on whether the ``boundary elements'' $x_{j-1}$ and $x_{j+k+1}$ are edges or vertices. Specifically, $S$ must lie in exactly one of the following sets:
$\mathcal{S}_1=\{S:x_{j-1},x_{j+k+1}\in\cutvx{}\}$,
$\mathcal{S}_2=\{S:x_{j-1}\in\cutvx{},x_{j+k+1}\in\eant\}$,
$\mathcal{S}_3=\{S:x_{j-1}\in\eant,x_{j+k+1}\in\cutvx{}\}$, and
$\mathcal{S}_4=\{S:x_{j-1},x_{j+k+1}\in\eant\}$.

\begin{figure}
\centering
\begin{tikzpicture}[x=2cm,y=2cm,scale=0.8]

\tikzstyle{whitenode} = [draw,circle,minimum size=5pt,inner sep=0pt, thick]
\tikzstyle{blacknode} = [draw,circle,minimum size=5pt,inner sep=0pt, fill]
\tikzstyle{whiteedge} = [line width=0.8mm]
\tikzstyle{blackedge} = [thick]
\tikzstyle{twist}     = [label distance=-1cm, text depth=0ex, rotate=-90]


\pgfmathsetmacro{\xa}{ 1.0} \pgfmathsetmacro{\ya}{ 1.0}
\pgfmathsetmacro{\xb}{ 1.0} \pgfmathsetmacro{\yb}{ 0.0}
\pgfmathsetmacro{\xc}{ 1.0} \pgfmathsetmacro{\yc}{-1.0}
\pgfmathsetmacro{\xd}{ 0.0} \pgfmathsetmacro{\yd}{-2.0}
\pgfmathsetmacro{\xe}{-1.0} \pgfmathsetmacro{\ye}{-2.0}
\pgfmathsetmacro{\xf}{-2.0} \pgfmathsetmacro{\yf}{-1.0}
\pgfmathsetmacro{\xg}{-2.0} \pgfmathsetmacro{\yg}{ 0.0}
\pgfmathsetmacro{\xh}{-1.0} \pgfmathsetmacro{\yh}{ 1.0}
\pgfmathsetmacro{\xi}{ 0.0} \pgfmathsetmacro{\yi}{ 1.0}
\pgfmathsetmacro{\xj}{ 2.0} \pgfmathsetmacro{\yj}{ 1.5}
\pgfmathsetmacro{\xk}{ 1.4} \pgfmathsetmacro{\yk}{-2.0}

\node[blacknode,label=above:$v_1$] (a) at (\xa,\ya) {};
\node[whitenode,label=right:$v_2$] (b) at (\xb,\yb) {};
\node[blacknode,label=right:$v_3$] (c) at (\xc,\yc) {};
\node[whitenode,label=below:$v_4$] (d) at (\xd,\yd) {};
\node[whitenode,label=below:$v_5$] (e) at (\xe,\ye) {};
\node[whitenode,label=left:$v_6$] (f) at (\xf,\yf) {};
\node[whitenode,label=left:$v_7$] (g) at (\xg,\yg) {};
\node[whitenode,label=above:$v_8$] (h) at (\xh,\yh) {};
\node[whitenode,label=above:$v_9$] (i) at (\xi,\yi) {};
\node[whitenode] (j) at (\xj,\yj) {};
\node[whitenode] (k) at (\xk,\yk) {};

\draw[blackedge] (a) -- (b) node [midway, yshift=-5pt, label=left:$e_1$] {};
\draw[blackedge] (b) -- (c) node [midway, label=left:$e_2$] {};
\draw[blackedge] (c) -- (d) node [midway, xshift=5pt, yshift=-5pt, label=above left:$e_3$] {};
\draw[blackedge] (d) -- (e) node [midway, label=above:$e_4$] {};
\draw[whiteedge] (e) -- (f) node [midway, xshift=-5pt, yshift=-5pt, label=above right:$e_5$] {};
\draw[blackedge] (f) -- (g) node [midway, label=right:$e_6$] {};
\draw[whiteedge] (g) -- (h) node [midway, xshift=-5pt, yshift=5pt, label=below right:$e_7$] {};
\draw[blackedge] (h) -- (i) node [midway, label=below:$e_8$] {};
\draw[blackedge] (i) -- (a) node [midway, xshift=-5pt, label=below:$e_9$] {};
\draw[blackedge] (a) -- (j);
\draw[blackedge] (c) -- (k);

\pgfmathsetmacro{\xx}{-4}
\pgfmathsetmacro{\yy}{-3}
\pgfmathsetmacro{\gap}{0.4}
\pgfmathsetmacro{\bump}{0.3}

\node[draw] (a00) at (\xx+ 0*\gap,\yy) {$v_1$};
\node[]     (a01) at (\xx+ 1*\gap,\yy) {$e_1$};
\node[]     (a02) at (\xx+ 2*\gap,\yy) {$v_2$};
\node[]     (a03) at (\xx+ 3*\gap,\yy) {$e_2$};
\node[draw] (a04) at (\xx+ 4*\gap,\yy) {$v_3$};
\node[]     (a05) at (\xx+ 5*\gap,\yy) {$e_3$};
\node[]     (a06) at (\xx+ 6*\gap,\yy) {$v_4$};
\node[]     (a07) at (\xx+ 7*\gap,\yy) {$e_4$};
\node[]     (a08) at (\xx+ 8*\gap,\yy) {$v_5$};
\node[draw] (a09) at (\xx+ 9*\gap,\yy) {$e_5$};
\node[]     (a10) at (\xx+10*\gap,\yy) {$v_6$};
\node[]     (a11) at (\xx+11*\gap,\yy) {$e_6$};
\node[]     (a12) at (\xx+12*\gap,\yy) {$v_7$};
\node[draw] (a13) at (\xx+13*\gap,\yy) {$e_7$};
\node[]     (a14) at (\xx+14*\gap,\yy) {$v_8$};
\node[]     (a15) at (\xx+15*\gap,\yy) {$e_8$};
\node[]     (a16) at (\xx+16*\gap,\yy) {$v_9$};
\node[]     (a17) at (\xx+17*\gap,\yy) {$e_9$};
\node[draw] (a18) at (\xx+18*\gap,\yy) {$v_1$};

\draw[decorate,decoration={brace,mirror,amplitude=4pt}]
    (\xx+\gap,\yy-\bump) -- (\xx+3*\gap,\yy-\bump) node
    [midway,below,yshift=-5pt] {$\mathcal{S}_1$ type};

\draw[decorate,decoration={brace,mirror,amplitude=4pt}]
    (\xx+5*\gap,\yy-\bump) -- (\xx+8*\gap,\yy-\bump) node
    [midway,below,yshift=-5pt] {$\mathcal{S}_2$ type};

\draw[decorate,decoration={brace,mirror,amplitude=4pt}]
    (\xx+10*\gap,\yy-\bump) -- (\xx+12*\gap,\yy-\bump) node
    [midway,below,yshift=-5pt] {$\mathcal{S}_4$ type};

\draw[decorate,decoration={brace,mirror,amplitude=4pt}]
    (\xx+14*\gap,\yy-\bump) -- (\xx+17*\gap,\yy-\bump) node
    [midway,below,yshift=-5pt] {$\mathcal{S}_3$ type};

\end{tikzpicture}
\caption{Example of cycle segments. Dark vertices are in $\cutvx{}$, and dark edges are in $\eant$. The vertices in boxes are cut vertices, and the edges in boxes are antipodal to cut vertices.}
\label{fig:segments}
\end{figure}


\begin{lemma}\label{lem:opp-seg}
Let $C$ be a cycle in $G$, and let $W=x_1\dots{x_{2n(C)+1}}$ be a closed trail in $C$ such that $x_1=x_{2n(C)+1}\in\cutvx{G}$.
If $S=x_jx_{j+1}\dots{x_{j+k}}$ is a cycle segment in a cycle $C$, then the subsequence $S'=\opp(x_j,C)\opp(x_{j+1},C)\dots\opp(x_{j+k},C)$ is a cycle segment in $C$. Moreover, if $S\in\mathcal{S}_1$ (resp.~$\mathcal{S}_2$, $\mathcal{S}_3$, $\mathcal{S}_4$), then $S'\in\mathcal{S}_4$ (resp.~$\mathcal{S}_3$, $\mathcal{S}_2$, $\mathcal{S}_1$).
\end{lemma}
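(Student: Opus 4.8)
The plan is to prove that the antipodal map $\opp(\cdot,C)$ acts on the cyclic sequence $W$ as a half‑turn rotation, and then read the whole lemma off from this together with the definition of $\eant$. Concretely: list the $2n(C)$ vertices and edges of $C$ in the cyclic order in which they occur in $W$; the claim is that $x_i\mapsto\opp(x_i,C)$ advances each element by $n(C)$ places in this cyclic order. Equivalently, writing $C=C_{2t+1}$ with vertices $0,1,\dots,2t$ in the order of $W$ and with edge $i$ joining $i$ and $i+1$, one has $\opp(\text{vertex }i,C)=\text{edge }(i+t)$ and $\opp(\text{edge }i,C)=\text{vertex }(i+t+1)$. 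This is a one‑line check from the distance formula in an odd cycle, but it is the one place where care is needed: it is the \emph{orientation‑preserving} nature of this rotation (as opposed to that of a reflection) that will force the swap $\mathcal{S}_2\leftrightarrow\mathcal{S}_3$ rather than fixing each of $\mathcal{S}_2$ and $\mathcal{S}_3$.

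From the rotation description I would extract three properties of $\opp(\cdot,C)$ on $V(C)\cup E(C)$. (i) It is an involution interchanging vertices and edges: this is just the uniqueness of the antipodal vertex/edge in an odd cycle already noted in the preliminaries, since if $\opp(v,C)=e$ then $v$ is antipodal to $e$, so $\opp(e,C)=v$. (ii) It sends $W$‑consecutive elements to $W$‑consecutive elements and preserves the cyclic orientation of $W$. (iii) For every $x\in V(C)\cup E(C)$, $\opp(x,C)\in\cutvx{G}$ iff $x\in\eant$, and $\opp(x,C)\in\eant$ iff $x\in\cutvx{G}$; this follows from (i) together with the definition of $\eant$ — for an edge $e\in E(C)$, $e\in\eant$ means $\opp(e,C)$ is a cut vertex, while for a vertex $v\in V(C)$ the edge $\opp(v,C)$ lies in $\eant$ exactly when $\opp(\opp(v,C),C)=v$ is a cut vertex. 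In particular $\opp(\cdot,C)$ interchanges $\cutvx{G}\cap V(C)$ with $\eant\cap E(C)$, and interchanges $V(C)\setminus\cutvx{G}$ with $E(C)\setminus\eant$.

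I would then take a cycle segment $S=x_jx_{j+1}\dots x_{j+k}$ of $C$ relative to $W$, so $x_{j+\ell}\notin\cutvx{G}\cup\eant$ for every $\ell\in\{0,\dots,k\}$ while $x_{j-1},x_{j+k+1}\in\cutvx{G}\cup\eant$. By (iii), $\opp(x_{j+\ell},C)\notin\cutvx{G}\cup\eant$ for all $\ell$ and $\opp(x_{j-1},C),\opp(x_{j+k+1},C)\in\cutvx{G}\cup\eant$, and by (ii) the elements $\opp(x_{j-1},C),\opp(x_j,C),\dots,\opp(x_{j+k},C),\opp(x_{j+k+1},C)$ occur consecutively, in this order, in $W$. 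This block avoids the base vertex $x_1$: since $x_1\in\cutvx{G}$, if $\opp(x_{j+\ell},C)=x_1$ then $x_{j+\ell}=\opp(x_1,C)\in\eant$ by (iii), contradicting $x_{j+\ell}\notin\cutvx{G}\cup\eant$. Hence the block is a genuine subsequence $x_{j'}x_{j'+1}\dots x_{j'+k}$ of $W$ with $j'\in\{2,\dots,2n(C)-k\}$, $\opp(x_{j+\ell},C)=x_{j'+\ell}$, and boundary elements $x_{j'-1}=\opp(x_{j-1},C)$ and $x_{j'+k+1}=\opp(x_{j+k+1},C)$ both in $\cutvx{G}\cup\eant$ — exactly the definition of a cycle segment. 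Thus $S'=\opp(x_j,C)\dots\opp(x_{j+k},C)$ is a cycle segment of $C$.

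It then remains to check the type statement. Because $\opp(\cdot,C)$ preserves the orientation of $W$, the left boundary of $S'$ is $\opp(x_{j-1},C)$ and the right boundary is $\opp(x_{j+k+1},C)$, and by (iii) $\opp(\cdot,C)$ carries a cut‑vertex boundary to an $\eant$ boundary and vice versa. So the ordered pair (type of left boundary, type of right boundary) of $S$ becomes the pair of opposite types for $S'$: $(\text{cut},\text{cut})\mapsto(\eant,\eant)$, $(\text{cut},\eant)\mapsto(\eant,\text{cut})$, $(\eant,\text{cut})\mapsto(\text{cut},\eant)$, $(\eant,\eant)\mapsto(\text{cut},\text{cut})$. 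Matching against the definitions of $\mathcal{S}_1,\dots,\mathcal{S}_4$ yields $S\in\mathcal{S}_1\Rightarrow S'\in\mathcal{S}_4$, $S\in\mathcal{S}_2\Rightarrow S'\in\mathcal{S}_3$, $S\in\mathcal{S}_3\Rightarrow S'\in\mathcal{S}_2$, and $S\in\mathcal{S}_4\Rightarrow S'\in\mathcal{S}_1$, which is the claim.
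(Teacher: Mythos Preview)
Your argument is correct and follows essentially the same route as the paper: both proofs rest on the fact that $\opp(\cdot,C)$ preserves adjacency in the cyclic sequence $W$ and interchanges $\cutvx{G}\cap V(C)$ with $\eant\cap E(C)$, from which the segment and type claims follow directly. Your framing of $\opp$ as a half-turn rotation is a bit more explicit than the paper's, and your check that the image block avoids the base vertex $x_1$ (hence lies in the required index range $\{2,\dots,2n(C)-k\}$) actually supplies a small detail the paper glosses over.
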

\begin{proof}
Observe that $z$ immediately succeeds $y$ in $W$ if and only if $\opp(z,C)$ immediately succeeds $\opp(y,C)$ in $W$.
Hence, to show that $S'$ is a cycle segment, it suffices to show that $\opp(x_{j-1},C),\opp(x_{j+k+1},C)\in\cutvx{G}\cup\eant$, and $\opp(x_{j+\ell},C)\not\in\cutvx{G}\cup\eant$ for all $\ell\in\{0,\dots,k\}$. By definition of $\eant$, we have for any $v\in{V(C)}$ that $v\in\cutvx{G}\Leftrightarrow\opp(v,C)\in\eant$, and similarly for any edge $e\in{E(C)}$, $e\in\eant\Leftrightarrow\opp(e,C)\in\cutvx{G}$. Generically, given $x\in{E(C)\cup{V(C)}}$, $x\in\cutvx{G}\cup\eant\Leftrightarrow\opp(x,C)\in\cutvx{G}\cup\eant$.

Suppose for contradiction that $\opp(x_{j-1},C)\not\in\cutvx{G}\cup\eant$. Then $x_{j-1}\not\in\cutvx{G}\cup\eant$, contradicting the fact that $S$ is a cycle segment. A symmetric argument shows that $\opp(x_{j+k+1},C)\in\cutvx{G}\cup\eant$. Finally, suppose for contradiction that $\opp(x_{j+\ell},C)\in\cutvx{G}\cup\eant$ for some $\ell\in\{0,\dots,k\}$. Then $x_{j+\ell}\in\cutvx{G}\cup\eant$, contradicting the fact that $S$ is a cycle segment. We conclude that $S'$ is a cycle segment.

Now suppose that $S\in\mathcal{S}_1$, so that $x_{j-1},x_{j+k+1}\in\cutvx{}$. Then $\opp(x_{j-1},C),\opp(x_{j+k+1},C)\in\eant$. Because $x_{j-1}$ immediately succeeds $x_j$ in $W$, we have that $\opp(x_{j-1},C)$ immediately succeeds $\opp(x_j,C)$ in $W$. Similarly, because $x_{j+k}$ immediately succeeds $x_{j+k+1}$ in $W$, $\opp(x_{j+k},C)$ immediately succeeds $\opp(x_{j+k+1},C)$ in $W$. We conclude that $S'\in\mathcal{S}_4$. A symmetric argument establishes the other cases ($S\in\mathcal{S}_2$, $\mathcal{S}_3$, $\mathcal{S}_4$).
\end{proof}

\begin{observation}\label{obs:opp-seg}
\autoref{lem:opp-seg} implies that the cycle segments in $\mathcal{S}_1$ and $\mathcal{S}_4$ come in pairs, as do the cycle segments in $\mathcal{S}_2$ and $\mathcal{S}_3$. More precisely, for every cycle segment $S\in\mathcal{S}_1$ in a cycle $C$, there exists a cycle segment $S'\in\mathcal{S}_4$ which is also contained in $C$. The same result holds for $\mathcal{S}_2$ and $\mathcal{S}_3$.
\end{observation}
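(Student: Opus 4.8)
The plan is to extract the pairing directly from the involution that is implicit in \autoref{lem:opp-seg}. I would fix a cycle $C$ of $G$ and, using \autoref{obs:cut-boyes}, choose a cut vertex $v_1\in V(C)$, so that every cycle segment of $C$ occurs as a consecutive subsequence $x_j\dots x_{j+k}$ of the clockwise closed trail $W=x_1\dots x_{2n(C)+1}$ based at $v_1$. For such a segment $S=x_j\dots x_{j+k}$, set $\Phi(S)=\opp(x_j,C)\,\opp(x_{j+1},C)\dots\opp(x_{j+k},C)$. \autoref{lem:opp-seg} says exactly that $\Phi(S)$ is again a cycle segment of $C$ and that $\Phi$ sends segments in $\mathcal{S}_1$ into $\mathcal{S}_4$, segments in $\mathcal{S}_2$ into $\mathcal{S}_3$, segments in $\mathcal{S}_3$ into $\mathcal{S}_2$, and segments in $\mathcal{S}_4$ into $\mathcal{S}_1$.

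Next I would check that $\opp(\cdot,C)$ is an involution on $V(C)\cup E(C)$. Because $C$ has odd length $2k+1$, each vertex $v\in V(C)$ lies at distance $k$ from the two endpoints of a unique edge of $C$ and is equidistant from no other adjacent pair, and dually each edge of $C$ has a unique antipodal vertex; hence $v\mapsto\opp(v,C)$ and $e\mapsto\opp(e,C)$ are mutually inverse bijections $V(C)\leftrightarrow E(C)$. Applying $\opp(\cdot,C)$ twice to each term of a segment therefore returns that term, so $\Phi\circ\Phi$ is the identity on the set of cycle segments of $C$; here I would also invoke the already-noted fact that a cycle segment is determined by the vertices and edges it contains, independently of the base point $v_1$, so that the equation $\Phi(\Phi(S))=S$ is an honest equality of segments rather than merely of cyclic shifts.

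Combining the two steps, the restriction of $\Phi$ to the $\mathcal{S}_1$-segments of $C$ maps injectively into the $\mathcal{S}_4$-segments of $C$, the restriction of $\Phi$ to the $\mathcal{S}_4$-segments of $C$ maps injectively into the $\mathcal{S}_1$-segments of $C$, and these two restrictions are mutually inverse; they are therefore bijections, which is precisely the asserted pairing of $\mathcal{S}_1$- and $\mathcal{S}_4$-segments inside $C$, with $S\neq\Phi(S)$ since $\mathcal{S}_1\cap\mathcal{S}_4=\emptyset$. Replacing $(\mathcal{S}_1,\mathcal{S}_4)$ by $(\mathcal{S}_2,\mathcal{S}_3)$ gives the remaining case verbatim. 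Since the substance of the argument is already carried by \autoref{lem:opp-seg}, I do not expect a genuine obstacle; the one place to be careful is the involutivity of $\opp(\cdot,C)$, which is exactly where oddness of $C$ enters, together with the bookkeeping confirming that $\Phi$ applied twice recovers the original segment rather than a shifted or reversed copy.
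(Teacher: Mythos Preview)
Your proposal is correct and is precisely the argument the paper has in mind: the paper gives no proof for this observation, simply asserting that it follows from \autoref{lem:opp-seg}, and your write-up is exactly the natural unpacking of that assertion via the involution $\Phi$ induced by $\opp(\cdot,C)$. Nothing further is needed.
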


\section{Black-White Partitions of Odd Cacti}

\noindent
We now introduce the notion of a \textit{black white partition} of an odd cactus $G$, and prove a series of results showing how black white partitions can be used to provide a lower bound for the strong rainbow connection numbers of these graphs. Additionally, we use the cycle segment taxonomy introduced in the previous section to outline a general construction for a special black white partition (\autoref{thm:bw}). In later sections, we see that \autoref{alg:1} provides an edge coloring that both strongly rainbow connects its input odd cactus and attains the lower bound that the black white partition constructed by \autoref{thm:bw} implies, thus proving the optimality of the strong rainbow coloring provided by \autoref{alg:1}.

\begin{definition}\label{def:bwc}
Let $G$ be an odd cactus, $(V_B, V_W)$ be a partition $V(G)$, and $(E_B, E_W)$ be a partition of $E(G)$. The tuple $(V_B, V_W, E_B, E_W)$ is called a black white partition of $G$ if each of the following properties hold:
\begin{enumerate}
    \item {For any cycle $C$ contained in $G$ and any edge $e \in E(C)$, $e \in E_B$ if and only if $\opp(e) \in V_W$ and $e \in E_W$ if and only if $\opp(e) \in V_B$}
    \item If $v_1 v_2 \in E_B$, then $\{v_1, v_2\} \subseteq V_B$
    \item If $v \in V_W$, then for any edge $uv \in E(G)$, $uv \in E_W$
    \item For every cut vertex $v \in V_W$, there exists a component $K$ in $G - v$ such that $E_B \subseteq E(K)$. 
\end{enumerate}
\end{definition}

\autoref{lem:going-around} will be used in conjunction with  \autoref{def:bwc} to prove \autoref{thm:bwcoloring}.

\begin{lemma}  \label{lem:going-around}
Let $C$ be an odd cycle, $u_1u_2 \in E(C)$, $w = \opp(u_1u_2)$. For any $v \in V(C) \backslash\{u_1, u_2, w\}$, if $d(v, u_2) < d(v, u_1)$ and $P$ is the shortest $v,u_2$ path, then the shortest $u_1, v$ path is $C[V(P) \cup \{u_1\}]$. 
\end{lemma}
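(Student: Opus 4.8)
The plan is to show that the walk $P'$ obtained from $P$ by appending the edge $u_2u_1$ (so that $P'$ runs from $v$ to $u_1$) is in fact a shortest $u_1,v$ path, and then that $P'$ coincides with the induced subgraph $C[V(P)\cup\{u_1\}]$. Throughout I would use that $C$, being an odd cycle, is geodetic---so shortest paths in $C$ are unique and every sub-path of a shortest path is itself a shortest path---together with the fact that $d(w,u_2)=d(w,u_1)$ equals the diameter of $C$, since the antipodal vertex $w=\opp(u_1u_2)$ lies at maximum distance from each endpoint of that edge.

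The first step is to check that $u_1\notin V(P)$. Were $u_1$ on the shortest $v,u_2$ path $P$, then splitting $P$ at $u_1$ and using that sub-paths of geodesics are geodesics would give $d(v,u_2)=d(v,u_1)+d(u_1,u_2)=d(v,u_1)+1$, contradicting the hypothesis $d(v,u_2)<d(v,u_1)$. Hence $P'$ is a genuine, repetition-free $u_1,v$ path in $C$ with $|E(P')|=d(v,u_2)+1$.

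The second step shows $P'$ is a shortest $u_1,v$ path. On one hand $d(u_1,v)\le|E(P')|=d(v,u_2)+1$; on the other, the strict inequality $d(v,u_2)<d(u_1,v)$ forces $d(u_1,v)\ge d(v,u_2)+1$. So $d(u_1,v)=|E(P')|$, and by geodeticity $P'$ is \emph{the} shortest $u_1,v$ path. It then remains only to identify $P'$ with $C[V(P)\cup\{u_1\}]$. Because $P'$ is a path contained in $C$, and a subgraph of $C$ with at least as many edges as vertices must contain a cycle and hence be all of $C$, the induced subgraph $C[V(P')]$ can contain an edge outside $E(P')$ only if $V(P')=V(C)$. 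I would exclude this by showing $w\notin V(P)\cup\{u_1\}$: if some shortest $v,u_2$ path passed through $w$, then, taking its sub-path from $w$ to $u_2$, we would get $d(v,u_2)\ge d(w,u_2)$, which by maximality of $d(w,u_2)$ forces $v=w$, contradicting $v\notin\{u_1,u_2,w\}$; and $w\ne u_1$ since $d(u_1,w)\ge 1$. Thus $V(P')=V(P)\cup\{u_1\}$ is a proper subset of $V(C)$, so $C[V(P)\cup\{u_1\}]=P'$, which is the desired conclusion.

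I expect the only genuine subtlety to be this last identification: all of the distance inequalities are elementary, but one must ensure that $C[V(P)\cup\{u_1\}]$ does not silently acquire the chord that would close $P'$ into the whole cycle $C$---which is precisely the role of the bound $d(v,u_2)\le d(w,u_2)$ (equivalently, $w\notin V(P)$). The first two steps are short and rely only on $C$ being geodetic.
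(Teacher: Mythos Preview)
Your proof is correct and takes a genuinely different route from the paper's. The paper fixes an explicit cyclic labeling $v_1,\dots,v_n$ with $u_2=v_1$, $u_1=v_n$, writes down the two candidate $v_1,v_k$ paths and the two candidate $v_n,v_k$ paths, and by an index computation shows that the hypothesis $d(v_k,v_1)<d(v_k,v_n)$ forces $k<j=(n+1)/2$; from this the identification $P_4=C[V(P_1)\cup\{v_n\}]$ is read off directly.

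Your argument avoids all indexing: you extend $P$ by the edge $u_2u_1$, use the integer inequality $d(v,u_2)<d(v,u_1)\le d(v,u_2)+1$ to pin down $d(v,u_1)$, and then handle the induced-subgraph identification by the clean observation that any edge of $C[V(P')]\setminus E(P')$ would close a cycle inside $C$, forcing $V(P')=V(C)$, which you rule out via $w\notin V(P')$. This is shorter and more conceptual than the paper's computation, and the ingredients (geodeticity, the fact that $d(w,u_2)$ realises the diameter) are exactly the right ones. The paper's version has the virtue of being completely explicit about which arc is which, but your approach makes clearer \emph{why} the lemma holds and would transfer more readily to similar statements.
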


\begin{proof}
Let $C$ be an odd cycle of length $n$, $u_1u_2 \in E(C)$, vertex $w = \opp(u_1 u_2)$. Let the vertices of $C$ be labeled $v_1, \dots, v_n$, such that $E(C) = \{v_i v_{i+1} : i \in \{1, \dots, n-1\}\} \cup \{v_1 v_n\}$. Without loss of generality, assume that $u_1 = v_n$ and $u_2 = v_1$. The vertex $w$ then must be $v_j$ where $j = \frac{n+1}{2}$. Consider any $v_k$ such that $k \not \in \{1, j, n\}$ and $d(v_1, v_k) < d(v_n, v_k)$. Consider the two $v_1, v_k$ paths contained in $C$, $P_1 = C[\{v_i : 1 \leq i \leq k\}]$ and $P_2 = C[\{v_i : k \leq i \leq n\} \cup \{v_1\}]$, and the two $v_n, v_k$ paths contained in $C, P_3 = C[\{v_i : k \leq i \leq n\}]$ and $P_4 = C[\{v_i : 1 \leq i \leq k\} \cup \{v_k\}]$. Note that if $k > j$, then since $j-1 = n-j$, it must be that $d(v_k, v_1) = \min(|E(P_1)|, |E(P_2)|) = \min(k-1, n-k+1) \geq \min(j, n-k+1) = \min(n-j+1, n-k+1) = n-k+1 > n-k \geq \min(k, n-k) = \min(|E(P_3)|, |E(P_4)|) = d(v_k, v_n)$, a contradiction. Finally, since it must be the case that $k < j$ so paths $P_2, P_3$ both have lengths strictly greater than paths $P_1, P_4$, respectively. Thus $P_1$ is the shortest $v_1, v_k$ path contained in $C$ and $P_4$ is the shortest $v_n, v_k$ path contained in $C$. By construction, $P_4 = C[V(P_1) \cup \{v_n\}]$. 
\end{proof}

\begin{theorem} \label{thm:bwcoloring}
Let $(V_B, V_W, E_B, E_W)$ be a black white partition of an odd cactus $G$. Then for any distinct pair of edges $u_1u_2, u_3u_4\in{E_B}$ and strong rainbow coloring $c$ of $E(G)$, $c(u_1u_2) \neq c(u_3u_4)$.
\end{theorem}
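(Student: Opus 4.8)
The plan is to reduce the statement to a purely structural claim: given two distinct black edges $e,f\in E_B$, I will exhibit vertices $x,y\in V(G)$ whose shortest $x$-$y$ path passes through both $e$ and $f$. Since an odd cactus is geodetic, this shortest path is unique; hence in any strong rainbow coloring $c$ it must be the rainbow $x,y$ path, so in particular $c(e)\neq c(f)$. To build such a pair $x,y$ I would distinguish two cases according to whether $e$ and $f$ lie in a common block of $G$ (which must then be a cycle, since a block consisting of a single edge cannot contain two distinct edges) or in different blocks, and in both cases I would lean on the structure of $\bct{G}$ together with \autoref{lem:going-around}.

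For the common-cycle case, let $C$ be the cycle, and delete $e$ and $f$ from $C$ to obtain two arcs $A,A'$ with $|A|\le|A'|$ and $|A|+|A'|=|E(C)|-2$; since $|E(C)|$ is odd, $|A|<|A'|$. Let $p'$ (resp.\ $r'$) be the endpoint of $e$ (resp.\ $f$) not lying on $A$, and let $R$ be the path obtained by traversing $e$, then $A$, then $f$: this is a $p'$-$r'$ path of length $|A|+2$ containing both $e$ and $f$, whose only competitor as a $p'$-$r'$ path in $C$ is $A'$, of length $|A'|$. Using that distances inside a block of a cactus coincide with distances in $G$, when $|A'|>|A|+2$ the path $R$ is the unique shortest $p'$-$r'$ path and we are done. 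The only alternative, by parity, is $|A'|=|A|+1$; but then a direct count along $C$ gives $\opp(e)=r'$, so $r'\in V_W$ by Property~1 of \autoref{def:bwc}, while $r'\in V_B$ by Property~2 (as $r'$ is an endpoint of the black edge $f$) --- a contradiction, so this subcase is vacuous.

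For the distinct-block case, let $B_e\ni e$ and $B_f\ni f$ be the (distinct) blocks, and let $B_e,t_1,\dots,t_k,B_f$ be the path joining them in $\bct{G}$, with $t_1,\dots,t_k\in\cutvx{G}$. The first step is to show every $t_i$ is black: if some $t_i\in V_W$, then by Property~4 all of $E_B$ --- in particular both $e$ and $f$, together with all their endpoints --- lies in a single component of $G-t_i$, which is impossible because $t_i$ separates $B_e$ from $B_f$ in $G$ and (again by Property~4, since the endpoints of $e$ and $f$ then avoid $t_i$) $t_i$ is an interior vertex of neither edge. In particular $t_1,t_k\in V_B$, so $t_1\neq\opp(e)$ and $t_k\neq\opp(f)$ by Property~1. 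I would then pick an endpoint $x$ of $e$ so that the shortest $x$-$t_1$ path \emph{within} $B_e$ traverses $e$: this is immediate if $B_e$ is a single edge or if $t_1$ is an endpoint of $e$, and otherwise follows from \autoref{lem:going-around} applied in the cycle $B_e$, taking $x$ to be the endpoint of $e$ farther from $t_1$. A symmetric choice gives an endpoint $y$ of $f$. Because $t_1,\dots,t_k$ are cut vertices separating $x$ from $y$, the unique (by geodeticity) shortest $x$-$y$ path in $G$ is the concatenation of the in-block shortest paths $x\rightsquigarrow t_1\rightsquigarrow\cdots\rightsquigarrow t_k\rightsquigarrow y$; it contains $e$ in its first segment and $f$ in its last, and the conclusion follows as above.

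The routine parts are the arc-length bookkeeping in a single cycle, the antipodality count in the vacuous subcase, and the fact --- which I would record as a short preliminary observation about cacti --- that a shortest path between two vertices of a block stays inside that block, so that in-block and global distances agree and cut-vertex distances add along the block-cut tree. The step I expect to require the most care is showing that every $t_i$ on the $B_e$-$B_f$ path is black: this is exactly where Property~4 of \autoref{def:bwc} must be combined correctly with the block-cut structure, including the boundary subtlety that a white cut vertex lying on that path must fail to be an interior vertex of either $e$ or $f$. Together with the antipodality contradiction in the common-cycle case, this is where Properties~1, 2, and 4 of the black white partition genuinely enter.
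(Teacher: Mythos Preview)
Your proposal is correct and follows the same underlying strategy as the paper: exhibit a pair of vertices whose (unique, by geodeticity) shortest path traverses both black edges, using \autoref{lem:going-around} together with Property~4 to control the cut vertices on the block--cut tree path, and using Properties~1 and~2 to kill the antipodal obstruction. Where you differ is in the case decomposition: the paper splits into five cases (shared endpoint; both cut edges; exactly one in a cycle; same cycle; distinct cycles), whereas you collapse everything into just two (same block versus distinct blocks). Your same-cycle argument via the two arcs $A,A'$ and the parity observation $|A'|\in\{|A|+1\}\cup\{|A|+3,|A|+5,\dots\}$ is cleaner than the paper's endpoint-chasing, and your distinct-block case uniformly absorbs what the paper treats as Cases~2, 3, and~5. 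Two small remarks: the path you write as $B_e,t_1,\dots,t_k,B_f$ in $\bct{G}$ actually alternates blocks and cut vertices, so you mean the cut vertices along that path; and you only need $t_1$ and $t_k$ to be black (which is all the paper proves), though your argument for all $t_i$ is correct and costs nothing extra.
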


\begin{proof}
We prove the theorem directly by verifying that 
for any pair of edges $u_1u_2, u_3u_4 \in E_B$, and for any strong rainbow edge coloring $c$ of $E(G)$, it must be that $c(u_1 u_2) \neq c(u_3 u_4)$. In many cases, this is shown by proving that a pair of vertices can be chosen such that the edge set of the shortest path between these vertices contains both $u_1u_2$ and $u_3u_4$. Odd cacti are geodetic, thus every pair of vertices in an odd cactus have a unique shortest path connecting them. Any strong rainbow coloring $c$ must then map the edges in the edge set of each shortest path to distinct colors, thus the existence of a shortest path that traverses $u_1u_2$ and $u_3u_4$ implies that $c(u_1 u_2) \neq c(u_3 u_4)$. We also note that because $G$ is a cactus, each edge is contained in the edge set of at most one cycle. We continue the proof by considering five cases. The first case we consider is the case in which $\{u_1, u_2\} \cap \{u_3, u_4\} \neq \emptyset$. We assume in all subsequent cases that $\{u_1, u_2\} \cap \{u_3, u_4\} = \emptyset$. The remaining cases are as follows: neither $u_1u_2$ nor $u_3u_4$ are contained in any cycles in $G$, exactly one of $u_1u_2, u_3u_4$ are contained in a cycle, $u_1u_2, u_3u_4$ are contained in the same cycle, and $u_1u_2, u_3u_4$ are contained in different cycles.
 
\textit{\underline{Case 1:} Vertex sets $\{u_1, u_2\}$ and $\{u_3, u_4\}$ are not disjoint}. Since edges $u_1u_2$ and $u_3,u_4$ are assumed to be distinct, the intersection of these sets is a single vertex. Without loss of generality, assume that $u_2 = u_3$. If the edge $u_1 u_4 \in E(G)$, then $G[\{u_1, u_2, u_4\}]$ is a 
3-cycle in which the black vertex $u_4$ is antipodal to the black edge $u_1u_2$, a contradiction. Hence $u_1 u_4 \not \in E(G)$, and $d(u_1, u_4) \geq 2$. The path $G[\{ u_1, u_2, u_4 \}]$ has length two is thus the shortest $u_1, u_4$ path contained in $G$ and contains both edge $u_1u_2$ and edge $u_3u_4$.

\textit{\underline{Case 2:} Neither $u_1u_2$ nor $u_3u_4$ are contained in any cycles}. If neither $u_1u_2$ nor $u_3u_4$ are contained in any cycles in $G$, then since $G$ is a cactus $u_1u_2$ and $u_3u_4$ must both be cut edges. By \autoref{lem:cut-edges}, $c(u_1u_2) \neq c(u_3u_4)$.

\textit{\underline{Case 3:} Exactly one of $u_1u_2, u_3u_4$ are contained in a cycle}. Let $u_1 u_2, u_3 u_4$ be edges in $E_B$, $u_1 u_2$ be a cut edge, and $C$ be a cycle such that $u_3 u_4 \in E(C)$. Since $u_1 u_2$ is a cut edge, $G - u_1 u_2$ is disconnected and exactly one of $u_1, u_2$ are contained in the vertex set of the component which contains $u_3 u_4$. Assume without loss of generality that vertex $u_2$ is contained in vertex set of the component that contains $u_3 u_4$. Since there is no $u_1, u_3$ path in $G - u_1 u_2$, any $u_1, u_3$ path in $G$ must contain edge $u_1 u_2$. By symmetry, any $u_1, u_4$ path in $G$ must also contain edge $u_1 u_2$.

Let $B_1, B_2$ be the distinct blocks in $G$ such that $u_1 u_2 \in E(B_1)$ and $u_3 u_4 \in E(B_2)$. Since the block-cut tree $\bct{G}$ is a tree, there exists a unique $B_1, B_2$ path $P_B$ in $\bct{G}$. Let $s$ be the vertex in $P_B$ that is adjacent to $B_2$. Since $s$ is a cut vertex in $G$ which separates $B_1$ and $B_2$, every path from $u_1$ to $u_3$ or $u_4$ in $G$ must contain vertex $s$. Additionally, $u_1 u_2$ and $u_3 u_4$ are contained in the edge sets of different components of $G-s$, it must be the case that $s \in V_B$. Since $s \in V_B, u_3 u_4 \in E_B$, and $s \in V(B_2)=V(C)$, it follows that $s \neq \opp(u_3 u_4)$ and thus $d(s, u_3) \neq d(s, u_4)$. Assume without loss of generality that $d(s, u_3) < d(s, u_4)$. By \autoref{lem:going-around}, the shortest $s,u_4$ path $P$ in $C$ must contain $u_3 u_4$. Since every path between two vertices in the same block of a graph is contained in that block, $P$ is also the shortest $s, u_4$ path in $G$. 

Since each $u_1, u_4$ path in $G$ must contain vertex $s$, $d(u_1, u_4) = d(u_1, s) + d(s, u_4)$. Let $P'$ be the shortest $u_1, s$ path in $G$. Since $P'' = P \cup P'$ is a $u_1, u_4$ path in $G$ that has length $d(u_1, s) + d(s, u_4)$, $P''$ is the unique shortest $u_1, u_4$ path in $G$. Since $E(P'')$ contains $u_1 u_2$ and $u_3 u_4$ by construction, $c(u_1 u_2) \neq c(u_3 u_4)$. 

\textit{\underline{Case 4:} Both $u_1u_2$ and $u_3u_4$ are contained in a single cycle}. Let $C$ be the odd cycle in $G$ containing $u_1u_2, u_3u_4$. Since $u_3 u_4$ cannot be antipodal to $u_1$, we assume without loss of generality that $d(u_1, u_3) < d(u_1,u_4)$. Similarly, $\opp(u_1 u_2) \neq (u_3)$, so either $d(u_1, u_3) < d(u_2,u_3)$ or $d(u_1, u_3) > d(u_2,u_3)$. Again, without loss of generality, we assume that $d(u_2, u_3) < d(u_1,u_3)$.

By \autoref{lem:going-around}, since $u_3 \not \in \{u_1, u_2, \opp(u_1 u_2)\}$ and $d(u_2, u_3) < d(u_3, u_1)$, the shortest $u_1, u_3$ path $P' = C[V(P) \cup \{u_1\}]$ where $P$ is the shortest $u_2, u_3$ path in $C$. Additionally, since $u_1 \not \in \{ u_3, u_4, \opp(u_3 u_4)\}$ and $d(u_1, u_3) > d(u_1,u_4)$, again by \autoref{lem:going-around}, the shortest $u_1, u_4$ path in $C$ is $P'' = C[V(P') \cup \{u_4\}]$. Since $C$ is a block in $G$ and $u_1, u_4$ are vertices in $V(C)$, any $u_1, u_4$ path in $G$ is contained in $C$ and thus $P''$ is also the shortest $u_1, u_4$ path in $G$. Since $P''$ is an induced subgraph that contains vertices $u_1, u_2, u_3, u_4$, edge set $E(P'')$ contains both $u_1u_2$ and $u_3 u_4$.

\textit{\underline{Case 5:} Edges $u_1 u_2$ and $u_3 u_4$ are contained in distinct cycles}.
Let $C_1, C_2$ be cycles in $G$ such that $u_1 u_2 \in E(C_1)$ and $u_3 u_4 \in E(C_2)$. Let $P_B$ be the unique $C_1, C_2$ path in $\bct{G}$, and $s_1, s_2$ be vertices in $V(P_B)$ such that $s_1, s_2$ are adjacent to $C_1, C_2$, respectively. Note that $s_1$ is a cut vertex in $G$ and that if $s_1 \in V_W$ then the edge sets of multiple components in $G-v$ contain edges in $E_B$. This would violate property 4 of black white partitions, so it must be that $s_1 \in V_B$. Similarly, $s_2$ must also be in $V_B$. Since $u_1 u_2, u_3 u_4 \in E_B$, property 1 of black white partitions implies that both $\opp(u_1 u_2) \neq s_1$ and $\opp(u_3 u_4) \neq s_2$ must hold. Assume without loss of generality that $d(u_1, s_1) > d(u_2, s_1)$ and $d(u_4, s_2) > d(u_3, s_2)$. Since every $u_1, u_4$ path in $G$ must contain vertex $s_2$, it follows that $d(u_1, u_4) = d(u_1, s_2) + d(s_2, u_4)$. Similarly every $u_1, s_2$ path must contain $s_1$, so $d(u_1, s_2) = d(u_1, s_1) + d(s_1, s_2)$. Combining these, $d(u_1, u_4) = d(u_1, s_1) + d(s_1, s_2) + d(s_2, u_4)$. 

By \autoref{lem:going-around}, the shortest $u_1, s_1$ path $P_1$ contained in $C_1$ must contain edge $u_1 u_2$ and the shortest $u_4, s_2$ path $P_2$ contained in $C_2$ must contain edge $u_3 u_4$. Let $P_3$ be the shortest $s_1, s_2$ path in $G$. Then path $P = P_1 \cup P_3 \cup P_2$ is a $u_1, u_4$ path in $G$ with length $d(u_1, s_1) + d(s_1, s_2) + d(s_2, u_4) = d(u_1, u_4)$. Path $P$ is thus the unique shortest $u_1, u_4$ path in $G$ and $E(P)$ contains both $u_1 u_2$ and $u_3 u_4$.

\end{proof}

By \autoref{thm:bwcoloring}, if $(V_B, V_W, E_B, E_W)$ is a valid black white partition of an odd cactus $G$, then for any valid strong rainbow coloring $c$ of $G$, $c$ must map each pair of edges in $E_B$ to different colors. This provides a lower bound for the strong rainbow connection number of $G$:

\begin{corollary} \label{cor:bwbound}
For any odd cactus $G$, if $(V_B, V_W, E_B, E_W)$ is a black white partition of $G$ then $|E_B| \leq src(G)$.
\end{corollary}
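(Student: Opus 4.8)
The plan is to derive this immediately from \autoref{thm:bwcoloring}. Fix an odd cactus $G$ together with a black white partition $(V_B, V_W, E_B, E_W)$, and let $k = src(G)$, so that by definition of the strong rainbow connection number there exists a strong rainbow coloring $c : E(G) \to \{1, \dots, k\}$. The goal is to show $|E_B| \leq k$.

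The key step is to observe that $c$ restricted to $E_B$ is injective. Indeed, take any two distinct edges $e, e' \in E_B$. \autoref{thm:bwcoloring} states precisely that $c(e) \neq c(e')$. Since $e, e'$ were arbitrary distinct elements of $E_B$, the map $c\!\restriction_{E_B} : E_B \to \{1, \dots, k\}$ is injective.

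An injection from $E_B$ into a set of size $k$ forces $|E_B| \leq k$. Substituting $k = src(G)$ gives $|E_B| \leq src(G)$, as claimed.

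I do not expect any genuine obstacle here: the corollary is a one-line consequence of \autoref{thm:bwcoloring} once one unwinds the definition of $src(G)$ as the minimum number of colors in a strong rainbow coloring. The only point worth stating carefully is that we apply the theorem to the specific optimal coloring $c$ realizing $k = src(G)$ colors, rather than to an arbitrary coloring; since the theorem holds for \emph{every} strong rainbow coloring, this causes no difficulty.
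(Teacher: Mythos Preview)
Your proof is correct and matches the paper's approach exactly: the corollary is presented there as an immediate consequence of \autoref{thm:bwcoloring}, with the observation that any strong rainbow coloring must assign distinct colors to the edges of $E_B$, so an optimal coloring witnesses $|E_B|\le src(G)$.
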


As \autoref{cor:bwbound} shows that a graph's black white partitions provide lower bounds for the strong rainbow connection number, it is natural to ask which black white partition provides the best possible bound for arbitrary odd cacti---that is, which black white partition maximizes $|E_B|$. Such a partition is constructed by \autoref{thm:bw}.
In \autoref{sec:alg}, we will see that the black white partition constructed in \autoref{thm:bw} is optimal in the sense that $|E_B|=src(G)$, and thus that this partition indeed attains the maximum value of $|E_B|$ over all black white partitions of $G$.


\begin{theorem} \label{thm:bw}
For any odd cactus $G$ which is not a cycle, the sets
\begin{subequations}\label{eqn:bw-all}
\begin{align}
\label{eqn:bw-vb}
V_B&=\textstyle\big(\bigcup_{S\in\mathcal{S}_1\cup\mathcal{S}_2}V(S)\big)\cup\cutvx{}\cup\{v\in{V(G)}:v\text{ is a leaf}\},\\
\label{eqn:bw-vw}
V_W&=\textstyle\big(\bigcup_{S\in\mathcal{S}_3\cup\mathcal{S}_4}V(S)\big),\\
\label{eqn:bw-eb}
E_B&=\textstyle\big(\bigcup_{S\in\mathcal{S}_1\cup\mathcal{S}_2}E(S)\big)\cup\cutes,\\
\label{eqn:bw-ew}
E_W&=\textstyle\big(\bigcup_{S\in\mathcal{S}_3\cup\mathcal{S}_4}E(S)\big)\cup\eant,
\end{align}
form a black white partition of $G$.
\end{subequations}
\end{theorem}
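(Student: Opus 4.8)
The plan is to check, one requirement at a time, that $(V_B,V_W,E_B,E_W)$ satisfies \autoref{def:bwc}: that $(V_B,V_W)$ partitions $V(G)$, that $(E_B,E_W)$ partitions $E(G)$, and that Properties~1--4 hold; I expect only Property~1 to carry any real content. I would dispatch the partition claims first. Recall from the previous section that $V(G)$ is partitioned by the vertex sets of the cycle segments of $G$ together with $\cutvx{}$ and the leaves of $G$, and that $E(G)$ is partitioned by the edge sets of the cycle segments of $G$ together with $\eant$ and $\cutes$. Since $\mathcal{S}_1,\dots,\mathcal{S}_4$ partition the set of cycle segments, since distinct cycle segments have disjoint vertex sets and disjoint edge sets, since no cut vertex or leaf lies in the vertex set of any cycle segment, and since $\cutes\cap\eant=\emptyset$ (a cut edge lies on no cycle, hence is antipodal to nothing), the definitions \refp{eqn:bw-vb}--\refp{eqn:bw-ew} immediately give $V_B\cap V_W=\emptyset$, $V_B\cup V_W=V(G)$, $E_B\cap E_W=\emptyset$, and $E_B\cup E_W=E(G)$.

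For Property~1, fix a cycle $C$ and an edge $e\in E(C)$; then $\opp(e)$ is a vertex of $C$, and since $(E_B,E_W)$ and $(V_B,V_W)$ are already known to be partitions it suffices to establish the single equivalence $e\in E_B\Leftrightarrow\opp(e)\in V_W$. If $e\in\eant$ this is immediate: $e\in E_W\setminus E_B$ while $\opp(e)$ is a cut vertex, so $\opp(e)\in V_B\setminus V_W$, and both sides are false. Otherwise $e$ lies in the edge set of a unique cycle segment $S=x_j\cdots x_{j+k}$ of $C$, say $e=x_{j+\ell}$. I would then invoke \autoref{lem:opp-seg}: it gives that $S'=\opp(x_j,C)\cdots\opp(x_{j+k},C)$ is again a cycle segment of $C$, with $\opp(e)=\opp(x_{j+\ell},C)\in V(S')$, and that $S$ belongs to $\mathcal{S}_1$ (resp.\ $\mathcal{S}_2$, $\mathcal{S}_3$, $\mathcal{S}_4$) exactly when $S'$ belongs to $\mathcal{S}_4$ (resp.\ $\mathcal{S}_3$, $\mathcal{S}_2$, $\mathcal{S}_1$). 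Invoking the disjointness facts above, $e$ --- a cycle-segment edge, hence not a cut edge and not in $\eant$ --- lies in $E_B$ precisely when $S\in\mathcal{S}_1\cup\mathcal{S}_2$, while $\opp(e)$ --- a cycle-segment vertex, hence neither a cut vertex nor a leaf --- lies in $V_W$ precisely when $S'\in\mathcal{S}_3\cup\mathcal{S}_4$; by the type-reflection these two conditions are equivalent, which is exactly Property~1.

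For Property~2, suppose $v_1v_2\in E_B$. If $v_1v_2\in\cutes$, then each of $v_1,v_2$ is either a leaf or a vertex of degree at least two incident to a cut edge, and in the latter case deleting that endpoint separates the two sides of $v_1v_2$, making it a cut vertex; in either case the endpoint lies in $V_B$. If instead $v_1v_2=x_{j+\ell}\in E(S)$ for a cycle segment $S=x_j\cdots x_{j+k}\in\mathcal{S}_1\cup\mathcal{S}_2$, then the endpoints of $v_1v_2$ are its neighbors $x_{j+\ell-1}$ and $x_{j+\ell+1}$ in the cyclic trail around $C$, each of which is either an interior term of $S$ (hence in $V(S)\subseteq V_B$) or a boundary term $x_{j-1}$ or $x_{j+k+1}$ which, being adjacent to the \emph{edge} $x_{j+\ell}$, is a vertex, hence not in $\eant$ and therefore in $\cutvx{}\subseteq V_B$. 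For Property~3, suppose $v\in V_W$, so $v=x_{j+\ell}\in V(S)$ for some cycle segment $S=x_j\cdots x_{j+k}\in\mathcal{S}_3\cup\mathcal{S}_4$; then $v$ is not a cut vertex and so has degree exactly two, its incident edges being $x_{j+\ell-1}$ and $x_{j+\ell+1}$, each of which is either an interior term of $S$ (hence in $E(S)\subseteq E_W$) or a boundary term adjacent to the \emph{vertex} $v$, hence an edge, hence in $\eant\subseteq E_W$. Finally, Property~4 is vacuous, since every vertex of a cycle segment is by definition not a cut vertex, so $V_W$ contains no cut vertices.

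The main obstacle is Property~1: its case analysis becomes tractable only because \autoref{lem:opp-seg} reflects the $\mathcal{S}_i$-type of a cycle segment under the antipodal map ($\mathcal{S}_1\leftrightarrow\mathcal{S}_4$, $\mathcal{S}_2\leftrightarrow\mathcal{S}_3$), which aligns precisely with the way $E_B$ and $V_W$ are built from $\mathcal{S}_1\cup\mathcal{S}_2$ and $\mathcal{S}_3\cup\mathcal{S}_4$ respectively. Beyond that, the one subtlety in Properties~2 and~3 is that a boundary term of a cycle segment is forced to be a cut vertex when it is a vertex and an edge of $\eant$ when it is an edge, according to whether the adjacent end of the segment is itself an edge or a vertex.
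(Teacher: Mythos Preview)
Your proposal is correct and follows essentially the same approach as the paper: both invoke the partition observations for the vertex and edge sets, both handle Property~1 via \autoref{lem:opp-seg} (the type-reflection $\mathcal{S}_1\leftrightarrow\mathcal{S}_4$, $\mathcal{S}_2\leftrightarrow\mathcal{S}_3$), both verify Property~2 by the cut-edge/segment-edge case split, and both note that Property~4 is vacuous because $V_W$ contains no cut vertices. The only cosmetic difference is that the paper derives Property~3 as the contrapositive of Property~2, whereas you argue it directly from the segment structure; these are equivalent.
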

\begin{proof}
Because $E(G)$ is partitioned by $\cutes$, $\eant$, and the edge sets of the cycle segments in $G$, it follows that $E_B$ and $E_W$ partition $E(G)$. Similarly, since $V(G)$ is partitioned by the leaves, cut vertices, and vertex sets of the cycle segments of $G$, it follows that $V_B$ and $V_W$ partition $V(G)$.

We now show that each of the properties 2-4 of black white partitions hold for $(V_B, V_W, E_B, E_W)$. For any edge $u_1 u_2$ in $E_B$, if $u_1 u_2$ is a cut edge then $u_1, u_2$ must each either be a leaf or a cut vertex and thus are both in $V_B$. If $u_1 u_2$ is not a cut edge then it must be in the edge set of a cycle segment $S$ in $\mathcal{S}_1$ or $\mathcal{S}_2$. If $u_1$ is a cut vertex then $u_1$ is in $V_B$ by construction, and if not then $u_1$ is in $V(S)$ and thus is in $V_B$. By symmetry $u_2$ must also be in $V_B$, showing that in all cases property 2 holds. Given that property 2 holds, its contrapositive statement ensures that if a vertex $v$ is in $V_W$, then no edge incident upon $v$ can be in $E_B$. Since $E(G)$ is partitioned by $E_B, E_W$, any edge incident upon $v$ must then be in $E_W$, thus property 3 also holds. Finally, since no cut vertex in $G$ is contained in the set $V_W$, property 4 holds vacuously.

We complete our proof 
by showing that property 1 holds. Consider an arbitrary edge $e$ in a cycle $C$ contained in $G$. If $e \in \eant$, then $e \in E_W$ and $\opp(e)$ is a cut vertex. All cut vertices in $G$ are in $V_B$, so $\opp(e) \in V_B$. If $e \not \in \eant$, then $e$ must be contained in a cycle segment $S$ contained in $C$. If $S$ is in $\mathcal{S}_1$ or $\mathcal{S}_2$, by \autoref{lem:opp-seg}, edge $e$ must be antipodal to a vertex in $V(S')$ where $S'$ is a segment of $C$ in $\mathcal{S}_3$ or $\mathcal{S}_4$, respectively (cf.~\autoref{obs:opp-seg}). In either case, $e \in E_B$ and since $V(S')\subseteq{V_W}$, $e$ is antipodal to a vertex in $V_W$. Similarly, If $S$ is in $\mathcal{S}_3$ or $\mathcal{S}_4$, edge $e$ must be antipodal to a vertex in $V(S')$ where $S'$ is a segment of $C$ in $\mathcal{S}_1$ or $\mathcal{S}_2$. In either case then, $e \in E_W$ and since $V(S') \subseteq V_B$, $e$ is antipodal to a vertex in $V_B$. Since each edge in an odd cactus is antipodal to at most one vertex and the vertex antipodal to each edge is in $V_B, V_W$ if $e$ is in $E_W, W_B$, respectively, no vertex in $V_B, V_W$ can be antipodal to an edge in $E_B, E_W$, respectively. Thus $(V_B, V_W, E_B, E_W)$ must satisfy property 1, and is therefore a valid black white partition of $G$. 
\end{proof}

\section{An Algorithm for Computing $src$ in Odd Cacti}\label{sec:alg}

We next introduce \autoref{alg:1}, which takes an odd cactus graph $G$ as its input and provides a strong rainbow coloring of $G$ as its output. The functioning of the algorithm requires the concept of a \textit{separation} of an odd cactus $G$ with respect to an antipodal pair $(e,v)$, which we define here.

\begin{definition}
Let $G$ be an odd cactus, $C$ be a cycle in $G$, and let $e\in{E(C)}$ such that $v:=\opp(e)\in\cutvx{G}$.
Let $K_1,\dots,K_m$ denote the components of $G-v$, labeled such that $e\in{E(K_1)}$.
The separation of $G$ with respect to the antipodal pair $(e,v)$ is the pair $(G_1,G_2)$ of connected subgraphs of $G$ such that $G_1=G[V(K_1)\cup\{v\}]$ and $G_2=G[\bigcup_{i=2}^mV(K_i)\cup\{v\}]$
\end{definition}

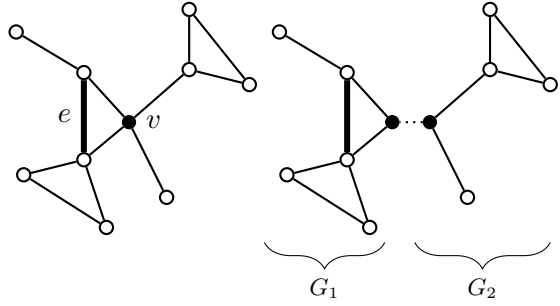
\begin{figure}\centering
\begin{tikzpicture}

\pgfmathsetmacro{\shift}{4}
\pgfmathsetmacro{\gap}{-0.5}

\tikzstyle{whitenode} = [draw,circle,minimum size=5pt,inner sep=0pt, thick]
\tikzstyle{blacknode} = [draw,circle,minimum size=5pt,inner sep=0pt, fill]
\tikzstyle{whiteedge} = [line width=0.8mm]
\tikzstyle{blackedge} = [thick]


\pgfmathsetmacro{\xa}{4.60} \pgfmathsetmacro{\ya}{ 0.50}
\pgfmathsetmacro{\xb}{3.80} \pgfmathsetmacro{\yb}{ 1.50}
\pgfmathsetmacro{\xc}{3.80} \pgfmathsetmacro{\yc}{ 0.70}
\pgfmathsetmacro{\xd}{3.50} \pgfmathsetmacro{\yd}{-1.00}
\pgfmathsetmacro{\xe}{3.00} \pgfmathsetmacro{\ye}{ 0.00}
\pgfmathsetmacro{\xf}{2.40} \pgfmathsetmacro{\yf}{ 0.66}
\pgfmathsetmacro{\xg}{1.50} \pgfmathsetmacro{\yg}{ 1.20}
\pgfmathsetmacro{\xh}{2.40} \pgfmathsetmacro{\yh}{-0.50}
\pgfmathsetmacro{\xi}{2.70} \pgfmathsetmacro{\yi}{-1.40}
\pgfmathsetmacro{\xj}{1.60} \pgfmathsetmacro{\yj}{-0.70}

\node[whitenode] (a) at (\xa,\ya) {};
\node[whitenode] (b) at (\xb,\yb) {};
\node[whitenode] (c) at (\xc,\yc) {};
\node[whitenode] (d) at (\xd,\yd) {};
\node[blacknode,label=right:$v$] (e) at (\xe,\ye) {};
\node[whitenode] (f) at (\xf,\yf) {};
\node[whitenode] (g) at (\xg,\yg) {};
\node[whitenode] (h) at (\xh,\yh) {};
\node[whitenode] (i) at (\xi,\yi) {};
\node[whitenode] (j) at (\xj,\yj) {};

\node[whitenode] (a0) at (\xa+\shift,\ya) {};
\node[whitenode] (b0) at (\xb+\shift,\yb) {};
\node[whitenode] (c0) at (\xc+\shift,\yc) {};
\node[whitenode] (d0) at (\xd+\shift,\yd) {};
\node[blacknode] (e1) at (\xe+\shift,\ye) {}; 

\node[blacknode] (e2) at (\xe+\shift+\gap,\ye) {};
\node[whitenode] (f0) at (\xf+\shift+\gap,\yf) {};
\node[whitenode] (g0) at (\xg+\shift+\gap,\yg) {};
\node[whitenode] (h0) at (\xh+\shift+\gap,\yh) {};
\node[whitenode] (i0) at (\xi+\shift+\gap,\yi) {};
\node[whitenode] (j0) at (\xj+\shift+\gap,\yj) {};

\draw[blackedge] (a) -- (b); \draw[blackedge] (a0) -- (b0);
\draw[blackedge] (a) -- (c); \draw[blackedge] (a0) -- (c0);
\draw[blackedge] (b) -- (c); \draw[blackedge] (b0) -- (c0);
\draw[blackedge] (c) -- (e); \draw[blackedge] (c0) -- (e1);
\draw[blackedge] (d) -- (e); \draw[blackedge] (d0) -- (e1);

\draw[thick, dotted] (e1) -- (e2);

\draw[blackedge] (e) -- (f); \draw[blackedge] (e2) -- (f0);
\draw[blackedge] (e) -- (h); \draw[blackedge] (e2) -- (h0);
\draw[blackedge] (f) -- (g); \draw[blackedge] (f0) -- (g0);

\draw[whiteedge] (f) -- (h) node [midway, left] {$e$};
\draw[whiteedge] (f0) -- (h0);

\draw[blackedge] (h) -- (i); \draw[blackedge] (h0) -- (i0);
\draw[blackedge] (h) -- (j); \draw[blackedge] (h0) -- (j0);
\draw[blackedge] (i) -- (j); \draw[blackedge] (i0) -- (j0);

\draw[decorate,decoration={brace,mirror,amplitude=10pt}]
(4.8,-1.6) -- (6.4,-1.6) node [midway,below, yshift=-10pt]
{\footnotesize $G_1$};

\draw[decorate,decoration={brace,mirror,amplitude=10pt}]
(6.8,-1.6) -- (8.6,-1.6) node [midway,below, yshift=-10pt]
{\footnotesize $G_2$};

\end{tikzpicture}
\caption{Example separation of an odd cactus with respect to the antipodal pair $(e,v)$ highlighted in bold. Note that the separated graphs $G_1$ and $G_2$ share no edges and only one vertex ($v$).}
\label{fig:sep}
\end{figure}

Note that, given an antipodal pair $(e,v)$ such that $v\in\cutvx{G}$, the separation of $G$ with respect to $(e,v)$ is unique. Moreover, $(E(G_1),E(G_2))$ is a partition of $E(G)$, $V(G_1)\cap{V(G_2)}=\{v\}$, and $V(G_1)\cup{V(G_2)}=V(G)$. An example of a separation is illustrated in \autoref{fig:sep}.

\autoref{alg:1} outlines the procedure for computing an optimal strong rainbow coloring of an odd cactus $G$. The sets required for the algorithm, $\cutes$, $\cutvx{}$, $\eant$ and $\mathcal{S}_i$ for $i=1,2$ can be computed in worst-case $O(n)$ time complexity as follows. First, using the algorithm of \citet{tarjan1972}, the set $\cutvx{}$ and all the blocks of $G$ can be computed in $O(n)$ time. Because $G$ is a cactus, each block is either a cycle or a cut edge (in addition to its two adjacent vertices). Hence $\cutes$ can be identified in linear time. Finally, the sets $\mathcal{S}_1$, $\mathcal{S}_2$ and $\eant$ can be identified as follows: for each cycle $C$ (found previously), iterate over each antipodal vertex-edge pair $(e,v)$ to identify the edges in $\eant$. The sets $\mathcal{S}_1$ and $\mathcal{S}_2$ can be identified by performing one more iteration over each cycle.

Because the sets $\cutes$, $\eant$ and $\mathcal{S}_1$ can be identified in linear time, we conclude that the formula \refp{eqn:main-formula} can be evaluated in linear time, and thus $src(G)$ can be computed in linear time for odd cacti. Furthermore, the ``for each'' statements in lines \ref{line:5} and \ref{line:6}, and respectively lines \ref{line:11} and \ref{line:12}, iterate through each edge in each cycle segment in $\mathcal{S}_1 \cup \mathcal{S}_2$ while performing a constant number of operations in each step. Thus, lines \ref{line:1}-\ref{line:15} terminate in linear time. Finally, the ``for each'' loop in line \ref{line:16} iterates through each edge $e\in\eant$, and computes a separation of $G$ about $(e, \opp(e))$ which can be done in linear time through a process similar to component identification. Given this separation, selecting an edge as done in line \ref{line:18} can be done in linear time as well, thus the for each statement in line \ref{line:16} terminates in overall $O(n^2)$ time. The worst case time complexity of \autoref{alg:1} is thus bounded by $O(n^2)$. We note that it may be possible to modify \autoref{alg:1} so that the algorithm terminates in worst case linear time complexity---we do not consider this task here.

\begin{algorithm} 
\caption{Compute an optimal strong rainbow coloring of an odd cactus $G$.}
\algnewcommand{\GoTo}{\State\textbf{go to}~}%
\algnewcommand{\algorithmicforeach}{\textbf{for each}}
\algdef{S}[FOR]{ForEach}[1]{\algorithmicforeach\ #1\ }
\algdef{S}[FOR]{For}[1]{\algorithmicfor\ #1\ }
\algdef{S}[IF]{If}[1]{\algorithmicif\ #1\ }
\algdef{S}[IF]{ElsIf}[1]{\algorithmicelse\ \algorithmicif\ #1\ }
\algdef{S}[WHILE]{While}[1]{\algorithmicwhile\ #1\ }
\label{alg:1} 
\begin{algorithmic}[1]
\Require $G$ is a non-empty, odd cactus distinct from cycles.
\State color $\gets 0$\label{line:1}
\ForEach{edge $e\in\cutes$}\label{line:2}
    \State color $\gets$ color + 1\label{line:3}
    \State $c(e) \gets$ color\label{line:4}
\EndFor
\ForEach{segment $S=e_1v_1\dots{v_{\ell-1}e_\ell} \in \mathcal{S}_1$ of a cycle $C$ in $G$}\label{line:5}
    \ForEach{$i=1,\dots,\ell$}\label{line:6}
        \State color $\gets$ color + 1\label{line:7}
        \State $c(e_i) \gets$ color\label{line:8}
        \If{$i\neq\ell$}\label{line:9}
            \State $c(\opp(v_i,C))\gets$ color  \label{line:10} 
        \EndIf
    \EndFor
\EndFor
\ForEach{segment $S=e_1v_1\dots{e_{\ell}v_\ell} \in \mathcal{S}_2$ of a cycle $C$ in $G$}\label{line:11}
    \ForEach{$i=1,\dots,\ell$}\label{line:12}
        \State color $\gets$ color + 1\label{line:13}
        \State $c(e_i) \gets$ color\label{line:14}
        \State $c(\opp(v_i,C))\gets$ color   \label{line:15} 
    \EndFor
\EndFor
\ForEach{edge $e\in\eant$}\label{line:16}
    \State $(G_1,G_2)\gets$ separation of $G$ with respect to $(e,\opp(e))$\label{line:17}
    \State $e'\gets$ arbitrary edge in $E(G_2)$ that is in $\cutes$ or in a cycle segment in $\mathcal{S}_1\cup\mathcal{S}_2$\label{line:18}
    \State $c(e)\gets{c(e')}$\label{line:19}
\EndFor
\Return c 
\end{algorithmic}
\end{algorithm}

In order to prove the correctness of \autoref{alg:1} (\autoref{thm:src}), we require the following three lemmas (Lemmas \ref{lem:l0}--\ref{lem:opp-comp}).

\begin{lemma}\label{lem:l0}
Let $G$ be an odd cactus, let $a,b\in{V(G)}$, and let $P$ be an $a,b$ path. Let $C$ be a cycle contained in $G$, and let $e\in{E(C)}$. If $e\in{E(P)}$ and $\opp(e)\in{V(P)}$, then $P$ is not a shortest $a,b$ path.
\end{lemma}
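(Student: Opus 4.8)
The plan is to argue by contradiction: suppose $P$ is a shortest $a,b$ path which uses the edge $e = u_1u_2 \in E(C)$ and which also passes through the antipodal vertex $w := \opp(e)$. The key fact to exploit is that odd cacti are geodetic, so $P$ is the \emph{unique} shortest $a,b$ path, and every subpath of $P$ is itself a shortest path between its endpoints.

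First I would set up notation: write $C$ as an odd cycle of length $n$ with vertices labeled so that $e = u_1u_2$ and $w = \opp(u_1u_2)$, noting $d(u_1,w) = d(u_2,w) = (n-1)/2$. Since $e \in E(P)$, the edge $e$ is traversed by $P$; without loss of generality $P$ visits $u_1$ immediately before $u_2$. Since $w \in V(P)$ as well, and $w \notin \{u_1,u_2\}$, the vertex $w$ appears on $P$ either before $u_1$ or after $u_2$ along $P$; say after $u_2$ (the other case is symmetric by reversing the roles of $a$ and $b$). Then the subpath $Q$ of $P$ from $u_2$ to $w$ is a shortest $u_2,w$ path in $G$, hence (since $C$ is a block and both endpoints lie in $V(C)$) it is contained in $C$, and it is the shortest such path in $C$. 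But now I want to look at the subpath $Q'$ of $P$ from $u_1$ to $w$: it consists of the edge $u_1u_2$ followed by $Q$, so it has length $1 + d(u_2,w) = 1 + (n-1)/2 = (n+1)/2$. On the other hand $d(u_1, w) = (n-1)/2 < (n+1)/2$, so $Q'$ is \emph{not} a shortest $u_1,w$ path, contradicting the fact that subpaths of shortest paths are shortest.

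The one subtlety to handle carefully is the case distinction about where $w$ sits relative to $e$ on $P$, and the subcase where $P$ visits $u_2$ before $u_1$ rather than $u_1$ before $u_2$ — but all of these reduce to the same computation after relabeling $u_1 \leftrightarrow u_2$ and/or $a \leftrightarrow b$. Another point worth a sentence: I should confirm $w \ne a$ and $w \ne b$ are not needed — even if $w$ is an endpoint of $P$, it still lies "on one side" of $e$ along $P$ and the same subpath argument goes through. The main obstacle, such as it is, is just making sure the ``shortest path from $u_1$ to $w$ via $e$ has length $(n+1)/2$ but $d(u_1,w) = (n-1)/2$'' step is airtight, i.e. that going around $C$ the short way from $u_1$ to $w$ really does give $(n-1)/2$ and that this is genuinely shorter — this is immediate from the antipodal definition, and it is essentially the content already isolated in \autoref{lem:going-around}, which I would cite to shortcut the cycle-arithmetic rather than redo it.

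I expect the proof to be short. An alternative, perhaps cleaner packaging: apply \autoref{lem:going-around} directly. Since $w = \opp(e)$ lies on $P$, and $P$ is shortest, the portion of $P$ between (say) $u_2$ and $w$ is the shortest $u_2,w$ path, so by \autoref{lem:going-around} applied with $v = w$ — wait, $w$ is excluded there — so I instead take any vertex of $Q$ strictly between $u_2$ and $w$ if one exists, or handle $Q$ of length $1$ directly. Given these small edge cases, I think the cleanest route is the self-contained contradiction computation sketched above rather than forcing it through \autoref{lem:going-around}, so that is what I would write up.
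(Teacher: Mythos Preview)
Your proof is correct. The underlying arithmetic is the same as the paper's---both arguments reduce to the observation that an arc of $C$ containing both $e$ and its antipode $w$ has length at least $(|V(C)|+1)/2$, whereas the complementary arc has length $(|V(C)|-1)/2$---but the packaging differs. The paper argues directly and constructively: it identifies the first and last vertices $v_{i_1},v_{i_2}$ of $P$ that lie in $C$, notes (using that $G$ is a cactus) that the entire subpath of $P$ between them is one arc of $C$ containing both $e$ and $w$, bounds that arc's length from below, and then explicitly exhibits a strictly shorter $a,b$ path by swapping in the other arc. You instead argue by contradiction using the principle that every subpath of a shortest path is itself shortest, focusing only on the $u_1$--$w$ subpath and showing its length $(n+1)/2$ exceeds $d(u_1,w)=(n-1)/2$. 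Your route avoids the index bookkeeping of the paper's version; the paper's route is constructive and does not need to invoke the block fact that shortest paths between two vertices of $C$ stay in $C$ (since it works with the explicit intersection $P\cap C$ from the outset). One small note: you do not actually need the geodetic property anywhere---the subpath principle holds in any graph---so that remark in your plan is harmless but unnecessary.
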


\begin{proof}
Let $v_1,\dots,v_\ell$ be an ordering of $V(P)$ such that $v_iv_{i+1}\in{E(P)}$ for all $i=1,\dots,\ell-1$, where $a=v_1$, $b=v_\ell$ and $\ell=|V(P)|$. Let $v_{i_1}$ be the first vertex in the ordering such that $v_{i_1}\in{V(C)}$, and let $v_{i_2}$ be the last vertex in the ordering such that $v_{i_2}\in{V(C)}$. Because $G$ is a cactus, $v_j\in{V(C)}$ for all $j=i_1,\dots,i_2$. Next, let $w_1,\dots,w_r$ be an ordering of $V(C)$ such that $w_1=v_{i_1},w_2=v_{i_1+1},\dots,w_q=v_{i_2}$ and $w_jw_{j+1}\in{E(C)}$ for all $j=1,\dots,r-1$. Let $P_1$ be the $w_1,w_q$ path in $C$ composed of the edges $(w_1,w_2),(w_2,w_3),\dots,(w_{q-1},w_q)$, and let $P_2$ be the $w_1,w_q$ path in $C$ composed of the edges $(w_1,w_r),(w_r,w_{r-1}),\dots,(w_{q+1},w_q)$. Then there exist indices $k_1\in\{1,\dots,q-1\}$ and $k_2\in\{1,\dots,q\}$ such that $e=w_{k_1}w_{k_1+1}$ and $\opp(e)=w_{k_2}$. Without loss of generality, assume that $k_1<k_2$ (otherwise, we may relabel the vertices $a$ and $b$ and repeat the argument). We have that
$|E(P_1)| \geq d(w_1,w_{k_1})+d(w_{k_1},w_{k_1+1})+d(w_{k_1+1},w_{k_2})+d(w_{k_2},w_q){\geq}d(w_{k_1},w_{k_1+1})+d(w_{k_1+1},w_{k_2})=(r+1)/2$. Combining this with the fact that $r=|E(P_1)|+|E(P_2)|$, we have that $|E(P_2)|\leq(r-1)/2$, and thus $E(P_2)<E(P_1)$. Hence, the path composed of edges 
\[
    (a=v_1,v_2),\dots,(v_{i_1-1},v_{i_1}=w_1),(w_1,w_r),\dots,(w_{q+1},w_q=v_{i_2}),(v_{i_2},v_{i_2+1}),\dots,(v_{\ell-1},v_\ell=b)
\]
is an $a,b$ path in $G$ that is strictly shorter than $P$, completing the proof (see \autoref{fig:paths}).
\end{proof}

\begin{figure}\centering
\begin{tikzpicture}

\pgfmathsetmacro{\alph}{0.5} 

\newcommand{\dasharc}[5]{ 
    \draw[thick] (#1,#2) -- (#1-0.5*#5*#1+0.5*#5*#3,
                        #2-0.5*#5*#2+0.5*#5*#4);
    \draw[thick] (#3,#4) -- (#3-0.5*#5*#3+0.5*#5*#1,
                        #4-0.5*#5*#4+0.5*#5*#2);
    \draw[dotted,very thick] 
          (#1-0.5*#5*#1+0.5*#5*#3,#2-0.5*#5*#2+0.5*#5*#4) --
          (#3-0.5*#5*#3+0.5*#5*#1,#4-0.5*#5*#4+0.5*#5*#2);
}

\tikzstyle{mynode} = [draw, fill, circle, minimum size=5pt, inner sep=0pt]


\pgfmathsetmacro{\xa}{1} \pgfmathsetmacro{\ya}{ 0}
\pgfmathsetmacro{\xb}{2} \pgfmathsetmacro{\yb}{ 0}
\pgfmathsetmacro{\xc}{3} \pgfmathsetmacro{\yc}{ 2}
\pgfmathsetmacro{\xd}{4} \pgfmathsetmacro{\yd}{ 2}
\pgfmathsetmacro{\xe}{5} \pgfmathsetmacro{\ye}{ 2}
\pgfmathsetmacro{\xf}{6} \pgfmathsetmacro{\yf}{ 2}
\pgfmathsetmacro{\xg}{3} \pgfmathsetmacro{\yg}{-2}
\pgfmathsetmacro{\xh}{6} \pgfmathsetmacro{\yh}{-2}
\pgfmathsetmacro{\xi}{7} \pgfmathsetmacro{\yi}{ 0}
\pgfmathsetmacro{\xj}{8} \pgfmathsetmacro{\yj}{ 0}

\node[mynode,label={above:$a=v_1$}]       (a) at (\xa,\ya) {};
\node[mynode,label={right:$v_{i_1}=w_1$}] (b) at (\xb,\yb) {};
\node[mynode,label={above:$w_2$}]         (c) at (\xc,\yc) {};
\node[mynode,label={above:$w_{k_1}$}]     (d) at (\xd,\yd) {};
\node[mynode,label={above:$w_{k_1+1}$}]   (e) at (\xe,\ye) {};
\node[mynode,label={above:$w_{k_2}$}]     (f) at (\xf,\yf) {};
\node[mynode,label={below:$w_r$}]         (g) at (\xg,\yg) {};
\node[mynode,label={below:$w_{q+1}$}]     (h) at (\xh,\yh) {};
\node[mynode,label={left:$w_q=v_{i_2}$}]  (i) at (\xi,\yi) {};
\node[mynode,label={above:$v_\ell=b$}]    (j) at (\xj,\yj) {};

\dasharc{\xa}{\ya}{\xb}{\yb}{0.5}
\dasharc{\xc}{\yc}{\xd}{\yd}{0.4}
\dasharc{\xe}{\ye}{\xf}{\yf}{0.4}
\dasharc{\xf}{\yf}{\xi}{\yi}{\alph}
\dasharc{\xi}{\yi}{\xj}{\yj}{0.5}
\dasharc{\xg}{\yg}{\xh}{\yh}{\alph}
\dasharc{\xb}{\yb}{\xc}{\yc}{1} 
\dasharc{\xd}{\yd}{\xe}{\ye}{1}
\dasharc{\xb}{\yb}{\xg}{\yg}{1}
\dasharc{\xh}{\yh}{\xi}{\yi}{1}

\draw[thick,->] (3,1)  .. controls (4,2)  and (5,2)  .. (6,1);
\draw[thick,->] (3,-1) .. controls (4,-2) and (5,-2) .. (6,-1);
\node at (4.5,1.3) {$P_1$};
\node at (4.5,-1.3) {$P_2$};

\end{tikzpicture}
\caption{Illustration of the proof of \autoref{lem:l0}.}
\label{fig:paths}
\end{figure}


\newcommand{\parti}[1]{(V_B^{#1}, V_W^{#1}, E_B^{#1}, E_W^{#1})}

\begin{lemma}\label{lem:bct}
Let $(V_B,V_W,E_B,E_W)$ be the black white partition of $G$ given by \refp{eqn:bw-all},
and let $B\in\mathcal{B}(G)$ be a block whose corresponding node in $\bct{G}$ is a leaf. Then $E(B)\cap{E_B}\neq\emptyset$.
\end{lemma}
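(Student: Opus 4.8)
The goal is to show that if $B$ is a block whose node in $\bct{G}$ is a leaf, then $B$ contains at least one black edge under the partition \refp{eqn:bw-all}. My plan is to do a case analysis on the type of block $B$: either $B$ is a cut edge (a $K_2$ block), or $B$ is a cycle.

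First I would dispose of the easy case. If $B$ is a cut-edge block, then its unique edge lies in $\cutes$, and hence in $E_B$ by \refp{eqn:bw-eb}; we are done immediately. So for the rest assume $B$ is a cycle $C$. Since $B$'s node is a leaf in $\bct{G}$, exactly one vertex of $C$ — call it $v_1$ — is a cut vertex of $G$, and every other vertex of $C$ has all its incident edges inside $C$ (in particular, no edge of $C$ is a cut edge). I would then note that the only way $C$ could fail to contain a black edge is if \emph{every} edge of $C$ lies either in $\eant$ or in a cycle segment of type $\mathcal{S}_3$ or $\mathcal{S}_4$; equivalently, there are no segments of $C$ in $\mathcal{S}_1\cup\mathcal{S}_2$ and no cut edges in $C$ — the latter we already have. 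So the crux is: a leaf-block cycle $C$ must contain a cycle segment in $\mathcal{S}_1$ or $\mathcal{S}_2$.

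To see this, I would argue about the single cut vertex $v_1$. Walk around $C$ starting at $v_1$; the elements of $\cutvx{G}\cup\eant$ that lie on $C$ partition its edges into segments. Since $v_1$ is the only cut vertex on $C$, the cut-vertex part of the "boundary markers" on $C$ consists of $v_1$ alone; all other markers are edges of $\eant$. Now consider $\opp(v_1,C)$: by definition of $\eant$, since $v_1\in\cutvx{G}$, the edge $\opp(v_1,C)$ lies in $\eant$. Consider the two edges of $C$ incident to $v_1$; let $e$ be one of them, and let $S$ be the cycle segment of $C$ containing $e$ (it exists because $e\notin\eant$, as $\opp(e)$ cannot be a cut vertex — the only cut vertex on $C$ is $v_1$, and $\opp(e)=v_1$ would force $e$ to be antipodal to $v_1$, but the edges antipodal to $v_1$ are the two edges "across" from $v_1$, not incident to it, since $C$ has odd length $\geq 3$; I would need a one-line check that for $C=C_3$ this still holds, or handle $C_3$ — actually for $C_3$, $\opp(v_1,C)$ is the edge opposite $v_1$, which is not incident to $v_1$, so fine). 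The segment $S$ has $v_1$ as one of its two boundary markers, hence $S\in\mathcal{S}_1\cup\mathcal{S}_3$ if the other marker is a cut vertex (impossible, since $v_1$ is the only one) — wait, the other marker could be $v_1$ again if $C$ has no $\eant$ edges at all, in which case $S$ wraps all the way around and $S\in\mathcal{S}_1$; otherwise the other marker is in $\eant$, so $S\in\mathcal{S}_2$. Either way $S\in\mathcal{S}_1\cup\mathcal{S}_2$, so $E(S)\subseteq E_B$ is nonempty, giving a black edge in $C$.

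The main obstacle I anticipate is handling the boundary-marker bookkeeping cleanly — in particular making rigorous the claim that a segment incident to $v_1$ cannot be "killed" by having $\opp(e)=v_1$ for an edge $e$ incident to $v_1$, which is really the statement that in an odd cycle no edge is antipodal to one of its own endpoints; this follows since $d(v_1,u_1)=0\neq 1=d(v_1,u_2)$ when $u_1u_2$ is incident to $v_1$ at $u_1=v_1$. Once that is pinned down, and the degenerate case of $C$ having \emph{no} $\eant$-edges (so that the single segment $S\in\mathcal{S}_1$ spans all of $C$ minus $\opp(v_1,C)$) is noted, the argument is routine. I would also double-check that $S$ is genuinely nonempty as a cycle segment, i.e.\ that $v_1$ and $\opp(v_1,C)$ are not adjacent along $C$ in a way that leaves no room — but in an odd cycle of length $n\geq 3$, the edge $\opp(v_1,C)$ is at distance $(n-1)/2\geq 1$ from $v_1$, so there is always at least one edge of $C$ incident to $v_1$ lying strictly between $v_1$ and the nearest $\eant$-marker, and that edge lies in $E(S)\cap E_B$.
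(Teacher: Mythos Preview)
Your proof is correct and takes essentially the same approach as the paper's: dispose of the cut-edge block via $\cutes\subseteq E_B$, then for a leaf-block cycle with unique cut vertex $v_1$ show that the segment of $C$ beginning at $v_1$ lies in $\mathcal{S}_1\cup\mathcal{S}_2$ and hence contributes a black edge. The paper is slightly tighter---it simply writes out the closed trail on $C$ and identifies its two segments as one in $\mathcal{S}_2$ and one in $\mathcal{S}_3$---and your wrap-around $\mathcal{S}_1$ case is in fact vacuous (since $\opp(v_1,C)\in\eant$ always provides an $\eant$-marker on $C$), but this does not affect correctness.
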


\begin{proof}
Suppose that $B$ is a block corresponding to a leaf node in $\bct{G}$. If $B$ consists of two vertices connected by an edge in $\cutes$, then the claim holds because $\cutes\subseteq{E_B}$. 
Otherwise $B$ is a cycle. Let  $v_1e_1\dots{v_{(\ell-1)/2}}e_{(\ell-1)/2}v_{(\ell+1)/2}\dots{e_\ell}v_1$ be the closed trail contained in $B$, where $v_1$ is the unique cut vertex in $V(B)$ (uniqueness follows because $B$ is a leaf in $\bct{G}$) and $e_{(\ell-1)/2}=\opp(v_1,B)$. Clearly, $v_1e_1\dots{v_{(\ell-1)/2}}$ is a segment of $B$ in $\mathcal{S}_2$, and similarly $v_{(\ell+1)}\dots{e_\ell}v_1$ is a segment of $B$ in $\mathcal{S}_3$. It follows that $E(B)\cap{E_B}\neq\varnothing$ because, in particular, it contains edge $e_1$.
\end{proof}




\begin{lemma} \label{lem:opp-comp}
Let $G$ be an odd cactus, $C$ be a cycle in $G$, edge $u_1u_2\in{E(C)}$ such that $w:=\opp(u_1u_2)\in\cutvx{}$, and let $(G_1,G_2)$ be the separation of $G$ with respect to $(u_1u_2,w)$. 
For any block $B$ in $G$,
if the vertex set of the (unique) $B,C$ path in $\bct{G}$ does not contain $w$, then $E(B)\cap{E(G_2)}=\emptyset$.
\end{lemma}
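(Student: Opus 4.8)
The plan is to convert the hypothesis about the block-cut tree into a statement about the components of $G-w$, and then prove the conclusion by an induction that walks along $\bct{G}$ starting from the cycle $C$. First I would note that, since $w=\opp(u_1u_2)\in V(C)$ and $w\in\cutvx{}$, the node $C$ is adjacent to the node $w$ in $\bct{G}$; consequently, the (unique) $B,C$ path in $\bct{G}$ avoiding $w$ is equivalent to $B$ and $C$ lying in the same connected component $\mathcal{T}$ of $\bct{G}-w$. Because $(E(G_1),E(G_2))$ partitions $E(G)$, it then suffices to prove the stronger statement that $E(B')\subseteq E(G_1)$ for \emph{every} block $B'$ whose node lies in $\mathcal{T}$; applying this to $B'=B$ gives $E(B)\cap E(G_2)=\emptyset$.

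For the base case I would observe that $C-w$ is a connected path containing the edge $u_1u_2$, and by the definition of the separation $u_1,u_2\in V(K_1)$, so $C-w\subseteq K_1$ and hence $V(C)\setminus\{w\}\subseteq V(K_1)$, i.e.\ $E(C)\subseteq E(G_1)$. I would also record the structural fact that $C$ is the \emph{only} node of $\mathcal{T}$ adjacent to $w$ in $\bct{G}$: if some other block-node $B'\in\mathcal{T}$ were also adjacent to $w$, then a $B',C$ path inside $\mathcal{T}$ (which avoids $w$) together with the edges $B'w$ and $Cw$ would form a cycle in the tree $\bct{G}$, a contradiction. In particular, every block-node $B'\in\mathcal{T}$ with $B'\neq C$ satisfies $w\notin V(B')$, so $B'$, being connected, is contained in a single component of $G-w$.

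Now I would induct on the length of the $C,B'$ path in $\bct{G}$, ranging over block-nodes $B'\in\mathcal{T}$, with $B'=C$ as the base case above. For the inductive step, let $B'\in\mathcal{T}$ with $B'\neq C$, let $B''$ be the block-node immediately preceding $B'$ on this path (which again lies in $\mathcal{T}$), and let $v$ be the cut vertex joining them, so $v\in V(B')\cap V(B'')$. By the inductive hypothesis $V(B'')\setminus\{w\}\subseteq V(K_1)$, and since $v$ and $w$ are distinct nodes of $\bct{G}$ we have $v\neq w$, hence $v\in V(K_1)$. As $w\notin V(B')$, the connected subgraph $B'\subseteq G-w$ meets $V(K_1)$ at $v$ and is therefore contained in $K_1$, so $V(B')\subseteq V(K_1)$ and $E(B')\subseteq E(G_1)$, completing the induction and the proof.

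I expect the main obstacle to be the structural fact that no component of $\bct{G}-w$ can reach two different components of $G-w$ — concretely, verifying that $C$ is the unique node of $\mathcal{T}$ adjacent to $w$ and that this makes the induction start correctly. Everything else (that $C-w$ lands in $K_1$, that blocks outside $\mathcal{T}$ are irrelevant because $E(G_1)$ and $E(G_2)$ partition $E(G)$, and that a connected subgraph of $G-w$ meeting $K_1$ lies in $K_1$) is routine once that fact is established.
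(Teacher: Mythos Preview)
Your argument is correct, and it takes a genuinely different route from the paper's. The paper argues the contrapositive: assuming some edge $u_3u_4\in E(B)\cap E(G_2)$, it works inside $G_2$, invokes (implicitly) that every block and cut vertex of $G_2$ is also a block or cut vertex of $G$, and from a $w,B$ path in $\bct{G_2}$ builds a $B,C$ path in $\bct{G}$ passing through $w$. Your proof is direct: you identify the component $\mathcal{T}$ of $\bct{G}-w$ containing $C$, observe that $C$ is its unique neighbor of $w$ (hence $w\notin V(B')$ for every other block-node $B'\in\mathcal{T}$), and then walk outward from $C$ along $\mathcal{T}$, showing by induction that each block encountered has $V(B')\setminus\{w\}\subseteq V(K_1)$ and therefore $E(B')\subseteq E(G_1)$. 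The paper's proof is a little shorter but leans on the unstated compatibility of $\bct{G_2}$ with $\bct{G}$; your proof is more self-contained and actually establishes the slightly stronger statement that \emph{every} block in $\mathcal{T}$ has its edges in $E(G_1)$, not just the given $B$. One cosmetic point: when you write ``since $v$ and $w$ are distinct nodes of $\bct{G}$'', the real reason is that $v$ lies on the $C,B'$ path inside $\mathcal{T}\subseteq\bct{G}-w$; saying this explicitly would make the step cleaner.
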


\begin{proof}
We show the contrapositive---that is, we show that if there exists $u_3u_4\in{E(B)}\cap{E(G_2)}$ then $w\in{V(P)}$, where $P$ is the (unique) $B,C$ path in $\bct{G}$. Because $G_2$ is connected, there exists a path from $w$ to $u_3$ in $G_2$. There are two cases: either $w$ is a cut vertex in $G_2$, or it is not a cut vertex in $G_2$. If $w$ is \textit{not} a cut vertex in $G_2$, then there exists a unique block $B'\in\mathcal{B}(G_2)$ such that $w\in{V(B')}$, and a $B',B$ path $P'$ contained in $\bct{G_2}$. Moreover, because each block in $\mathcal{B}(G_2)$ is a block in $\mathcal{B}(G)$ and each cut vertex in $G_2$ is a cut vertex in $G$, the path $P'$ is also a path in $\bct{G}$. By concatenating $P'$ with the edges $(C,w)$ and $(w,B')$ in $\bct{G}$, we obtain a $B,C$ path in $\bct{G}$ containing $w$, as was to be shown.

Otherwise, if $w$ \textit{is} a cut vertex in $G_2$,  
then there exists a $w,B$ path $P'$ in $\bct{G_2}$. By the same argument as above, $P'$ is also a path in $\bct{G}$. By concatenating the path $P'$ with the edge $(C,w)$ in $\bct{G}$, we obtain a $B,C$ path in $\bct{G}$ containing $w$, as was to be shown.
\end{proof}

\begin{theorem}\label{thm:src}
Let $G$ be an odd cactus that is not a cycle. 
Then the function $c$ returned by \autoref{alg:1}
is a $k$-coloring of $E(G)$ that strongly rainbow connects $G$.
Moreover, $k=src(G)$.
\end{theorem}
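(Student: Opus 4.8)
The plan is to establish three facts, in order: (1) the coloring $c$ returned by \autoref{alg:1} is well defined and uses exactly $k=|E_B|$ colors, where $(V_B,V_W,E_B,E_W)$ is the black white partition of \autoref{thm:bw}; (2) consequently $k=|E_B|\le src(G)$; and (3) $c$ strongly rainbow connects $G$, so that $src(G)\le k$. Combining (2) and (3) yields $src(G)=k=|E_B|$.

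For step (1), I would track the color assignments. Lines \ref{line:2}--\ref{line:4} give each edge of $\cutes$ its own fresh color. For a segment $S\in\mathcal{S}_1$ (resp.~$\mathcal{S}_2$), lines \ref{line:5}--\ref{line:10} (resp.~\ref{line:11}--\ref{line:15}) give each edge of $S$ a fresh color and copy that color onto the edge $\opp(v_i,C)$; by \autoref{lem:opp-seg} and \autoref{obs:opp-seg} the edges $\opp(v_i,C)$ produced this way are precisely the edges of the $\mathcal{S}_4$ (resp.~$\mathcal{S}_3$) segments, and since the antipodal map is an involution and no segment vertex is a cut vertex, none of these edges lies in $\eant$ or $\cutes$. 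Hence lines \ref{line:2}--\ref{line:15} color each cut edge, each black segment edge, and each white segment edge exactly once and without overlap, and lines \ref{line:16}--\ref{line:19} color precisely the remaining edges, those of $\eant$, once each. I also need the selection in line \ref{line:18} to be possible, that is, $E(G_2)\cap E_B\ne\emptyset$: here I would show that $G_2$, which contains the cut vertex $\opp(e)$ of $G$, contains a block that is a leaf of $\bct{G}$ --- by comparing $\bct{G_2}$ with $\bct{G}$ --- and then invoke \autoref{lem:bct}. Counting the fresh colors gives $k=|\cutes|+\sum_{S\in\mathcal{S}_1\cup\mathcal{S}_2}|E(S)|=|E_B|$ by \refp{eqn:bw-eb}, so $c$ is a valid $k$-coloring; step (2) is then immediate from \autoref{cor:bwbound}.

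Step (3) is the substance. Fix $u,w\in V(G)$ and let $P$ be the unique shortest $u,w$ path (unique since odd cacti are geodetic); suppose for contradiction that $P$ contains distinct edges $g_1,g_2$ with $c(g_1)=c(g_2)$. By the bookkeeping of step (1), each color class of $c$ consists of one representative $r$ (a cut edge or a black segment edge $e_i$), possibly together with the white edge $\opp(v_i,C)$ in the case $r=e_i$, and a set of $\eant$ edges that selected $r$ in line \ref{line:18}. The workhorse is the following observation: if $P$ traverses some $f\in\eant$, then, writing $v_f=\opp(f)$ and letting $(G_1^f,G_2^f)$ be the separation of $G$ with respect to $(f,v_f)$, we have $v_f\notin V(P)$ (otherwise \autoref{lem:l0} contradicts the minimality of $P$), and therefore $E(P)\subseteq E(G_1^f)$, since $\{v_f\}=V(G_1^f)\cap V(G_2^f)$ and $f\in E(G_1^f)$. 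With this in hand, I would dispatch the possibilities for $\{g_1,g_2\}$. If $\{g_1,g_2\}=\{e_i,\opp(v_i,C)\}$, a short index computation in the closed trail of $C$ shows that $\opp(e_i,C)$ is an endpoint of the edge $\opp(v_i,C)$, so $\opp(e_i,C)\in V(P)$ and \autoref{lem:l0} again contradicts minimality. If $\{g_1,g_2\}$ contains an $\eant$ edge $f$ together with its representative $r$, then $r\in E(G_2^f)$ while $E(P)\subseteq E(G_1^f)$, which is impossible. If $\{g_1,g_2\}$ contains an $\eant$ edge $f$ together with a white edge $\opp(v_i,C)$ where $r=e_i$, then \autoref{lem:opp-comp} shows that the entire cycle $C$ lies in $E(G_2^f)$, in particular $\opp(v_i,C)\in E(G_2^f)$, again contradicting $E(P)\subseteq E(G_1^f)$. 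Finally, if $g_1,g_2\in\eant$ share a representative $r$: if they lie in one cycle this is impossible, since the antipodal map is a bijection between that cycle's vertices and edges; otherwise let $C_1,C_2$ be the cycles containing $g_1,g_2$, and use \autoref{lem:opp-comp} to see that $\opp(g_1)$ (resp.~$\opp(g_2)$) is the cut-vertex node adjacent to $C_1$ (resp.~$C_2$) on the $\bct{G}$-path from the block of $r$ to $C_1$ (resp.~$C_2$); a median-in-a-tree argument then identifies $\opp(g_1)$ and $\opp(g_2)$ as the first cut vertices on the $C_1,C_2$ path in $\bct{G}$, so that $P$, traversing edges of both $C_1$ and $C_2$, must pass through $\opp(g_1)$, and \autoref{lem:l0} gives a final contradiction. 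Hence $P$ is rainbow, $c$ strongly rainbow connects $G$, and $src(G)\le k$; with step (2), $src(G)=k=|E_B|$.

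The hard part will be this last sub-case --- two distinct $\eant$ edges whose colors were copied from the same representative. Making it rigorous requires locating that representative in the block-cut tree via \autoref{lem:opp-comp}, recognizing the two antipodal cut vertices as the first cut-vertex nodes on the relevant $\bct{G}$-paths, and using that any $u,w$ path meeting both cycles must traverse the entire block-cut-tree path joining them. A secondary obstacle is the bookkeeping in step (1), specifically the argument that line \ref{line:18} is always executable, which relies on the relationship between $\bct{G_2}$ and $\bct{G}$ together with \autoref{lem:bct}.
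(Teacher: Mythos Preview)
Your plan matches the paper's proof in both architecture and execution: show $c$ is an $|E_B|$-coloring (using \autoref{lem:bct} to justify line~\ref{line:18}), invoke \autoref{cor:bwbound}, and then dispatch same-colored edge pairs on a shortest path via the same case split, driven throughout by \autoref{lem:l0} and \autoref{lem:opp-comp}; your ``workhorse observation'' $E(P)\subseteq E(G_1^f)$ is a clean repackaging of what the paper does ad hoc in each sub-case.

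Two small points to tighten when you write it out. In the two-$\eant$-edges-in-one-cycle sub-case, the antipodal bijection alone only yields $\opp(g_1)\neq\opp(g_2)$; you still need $E(G_2^{g_1})\cap E(G_2^{g_2})=\emptyset$, which follows because these two cut vertices are distinct neighbors of $C$ in $\bct{G}$ and hence lie in different components of $\bct{G}-C$ (this is exactly the argument the paper gives). In the different-cycles sub-case, your median argument does not force \emph{both} $\opp(g_1)$ and $\opp(g_2)$ onto the $C_1,C_2$ path in $\bct{G}$: if the median of $\{C_1,C_2,B'\}$ is $C_1$ itself, then $\opp(g_1)$ lies toward $B'$ and need not be on the $C_1,C_2$ path. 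However, at least one of the two is always on that path, and one suffices for the \autoref{lem:l0} contradiction, so the argument goes through with that adjustment.
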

\begin{proof}
Let $\parti{}$ be the black white partition \refp{eqn:bw-all}, let $k = |E_B|$ and let $c$ be the function returned by \autoref{alg:1} with input graph $G$. Because \autoref{cor:bwbound} ensures that $k \leq src(G)$, it suffices to show that $c$ is a $k$-coloring of $E(G)$ and that $c$ strongly rainbow connects $G$.

We first show that the function $c$ is a $k$-coloring of $E(G)$. Since $G$ is a cactus graph, any edge $e$ in $E(G)$ must either be a cut edge or contained in the edge set of exactly one cycle. If edge $e$ is a cut edge then $c(e)$ is assigned in line \ref{line:4}. If edge $e$ is in the edge set of a cycle $C$, then $e$ is either in a cycle segment in $G$ or $e \in \eant$. If edge $e$ is contained in a cycle segment in $\mathcal{S}_1$ ($\mathcal{S}_2$), then $c(e)$ is assigned in line \ref{line:8}  (line \ref{line:14}). If edge $e$ is contained in a cycle segment in $\mathcal{S}_3$ ($\mathcal{S}_4$), then by \autoref{lem:opp-seg}, edge $e$ must be antipodal to a vertex in a cycle segment in $\mathcal{S}_2$ ($\mathcal{S}_1$) and thus $c(e)$ is assigned in line \ref{line:15} (line \ref{line:10}). If instead edge $e \in \eant$, then $\opp(e)\in\cutvx{G}$. Let $(G_1,G_2)$ denote the separation of $G$ with respect to the antipodal pair $(e,\opp(e))$, so that $G_1$ is the component of $G-\opp(e)$ containing $e$, and $G_2=G[V(G)\setminus{V(G_1)}]$. Then there exists a block $B$ in $G$ such that $B$ is contained in $G_2$, and $B$ is the label of a leaf node in $\bct{G}$. By \autoref{lem:bct}, $E(B)\cap{E_B}\neq\varnothing$. Clearly $E(B)\subseteq{E(G_2)}$, thus $E_B\cap{E(G_2)}\neq\varnothing$, and the edge $e'$ arbitrarily selected in line \ref{line:18} is well defined. The value $c(e)$ is thus assigned in line \ref{line:19}. The domain of function $c$ is $E(G)$.

Throughout \autoref{alg:1}, whenever a value $c(e)$ is assigned for an edge $e \in E(G)$, $c(e)$ is either initialized with the value $color$ or $c(e')$ where $e'$ is an edge in $E_B$. Additionally, $color$ is initialized with value $0$ in line \ref{line:1} and is incremented exactly once for each edge in $E_B$. Since $G$ is assumed to be an odd cactus distinct from a cycle, $E_B$ cannot be empty and thus $color$ must be incremented before the first $c(e)$ is assigned. Thus the range of $c$ is the set $\{1, \dots, k\}$. Thus, function $c$ is a $k$-coloring of $E(G)$.

We complete our proof by showing that the edge coloring $c$ strongly rainbow connects $G$. To do this, we consider an arbitrary pair of distinct vertices $v_a, v_b$ in $V(G)$ and prove that the unique shortest $v_a, v_b$ path $P$ in $G$ must be a rainbow with respect to the edge coloring $c$. If $P$ has length 1 then clearly $P$ is rainbow, so assume that $P$ has at least two distinct edges and consider an arbitrary pair of such edges $u_1 u_2$ and $u_3 u_4$. We assume for contradiction that $c(u_1 u_2) = c(u_3 u_4)$. It is clear from \autoref{alg:1} that if $c(u_1 u_2) = c(u_3 u_4)$ then, up to symmetry, either $u_1 u_2$ is an edge in a segment $S \in \mathcal{S}_1 \cup \mathcal{S}_2$ of a cycle $C$ and $u_3 u_4 = \opp(u_2, C)$, or at least one of $u_1 u_2, u_3 u_4$ are in $\eant$. By deriving a contradiction in either case, we verify that $P$ is indeed rainbow with respect to $c$ and thus that every pair of vertices in $V(G)$ are rainbow connected. 

First, consider the case that $u_1 u_2$ is an edge in a segment $S \in \mathcal{S}_1 \cup \mathcal{S}_2$ of a cycle $C$ and $u_3 u_4 = \opp(u_2, C)$. By \autoref{lem:l0}, since the vertex $u_2 \in V(C)$ and its antipodal edge $\opp(u_2, C) = u_3 u_4 \in E(C)$, $P$ cannot be a shortest $v_a v_b$ path in $G$, a contradiction. Next, we consider the case that exactly one of $u_1 u_2, u_3 u_4$ are in $\eant$. Assume without loss of generality that $u_1 u_2 \in \eant$ and $u_3 u_4 \not \in \eant$. By lines \ref{line:18} and \ref{line:19}, edge $u_3 u_4$ must be in both $E_B$ and $E(G_2)$ where $(G_1, G_2)$ is the separation of $G$ with respect to the antipodal pair $(u_1 u_2, \opp(u_1 u_2))$. Additionally, let $B$ be the block in $G$ such that $u_3 u_4 \in E(B)$. If $V(P)$ contains $\opp(u_1 u_2)$, then by \autoref{lem:l0}, $P$ cannot be the shortest $v_a, v_b$ path in $G$. If $\opp(u_1 u_2) \not \in V(P)$, then since $P$ is a path in $G$ such that $u_1 u_2, u_3 u_4$ are both in $E(P)$ and the cut vertex $\opp(u_1 u_2) \not \in V(G)$, the block-cut tree $\bct{G}$ must contain a path $P'$ between vertices $B, C \in V(\bct{G})$ such that $V(P')$. By \autoref{lem:opp-comp} then, $E(B) \cap E(G_2) = \emptyset$. This is a contradiction as $u_3 u_4$ was chosen to be in $E(G_2)$ and $B$ is simply the block in $G$ such that $u_3 u_4 \in E(B)$. 

Finally, we consider the case that both $u_1 u_2, u_3 u_4$ are in $\eant$. It is clear that \autoref{alg:1} assigns distinct values of $c(e)$ for each edge $e \in E_B$. Thus there exists a unique edge $e' \in E_B$ such that $c(e') = c(u_1 u_2) = c(u_3 u_4)$. Then edge $e'$ must be the edge chosen in line \ref{line:19} when $c(u_1 u_2), c(u_3 u_4)$ are each assigned. By lines \ref{line:17} and \ref{line:18} then, edge $e'$ must be in $E(G_2) \cap E(G_4)$, where $(G_1, G_2)$ and $(G_3, G_4)$ are the separations of $G$ for the antipodal pairs $(u_1 u_2, \opp(u_1 u_2))$ and $(u_3 u_4, \opp(u_3 u_4))$, respectively. Let $B_1, B_2$, and $B'$ be the blocks whose edge sets contain $u_1 u_2, u_3 u_4$, and $e'$, respectively. Since edge $e' \in E(G_2) \cap E(G_4)$, $E(B_1) \subseteq E(G_1)$, and $E(B_2) \subseteq E(G_3)$, block $B'$ must be distinct from both $B_1$ and $B_2$. If block $B = B_1 = B_2$, then since no two edges in a cycle can be antipodal to the same cut vertex, it follows that $\opp(u_1 u_2) \neq \opp(u_3 u_4)$. The block-cut tree $\bct{G}$ must then contain distinct vertices labeled $B$, $\opp(u_1 u_2)$, $\opp(u_3 u_4)$, and in $\bct{G}$ the vertices $\opp(u_1 u_2)$, $\opp(u_3 u_4)$ must each be adjacent to vertex $B$. Since $\bct{G}$ is a tree, the graph $G' = \bct{G} - B$ must then contain distinct components $K_1, K_2$ such that $\opp(u_1 u_2) \in V(K_1)$ and $\opp(u_3 u_4) \in V(K_2)$. Then $E(G_2), E(G_4)$ are the sets of edges in $E(G)$ which are contained in the vertex set of a block in $V(K_1), V(K_2)$, respectively, and thus $E_2$ and $E_4$ must be disjoint. This contradicts the previous assertion that edge $e'$ must be in $E(G_2) \cap E(G_4)$, so we conclude that $B_1 \neq B_2$.

Next, consider the $B_1, B'$ path $P'_{bct}$ in the block-cut tree $\bct{G}$. Assume without loss of generality, that vertex $B_2$ is not in $V(P'_{bct})$ (if vertex $B_2$ is contained in this path then the $B_1, B$ path in $\bct{G}$ does not contain $B_1$ and $V(G)$ can be relabelled). If $V(P)$ contains $\opp(u_3 u_4)$ then by \autoref{lem:l0}, $P$ cannot be a shortest $v_a, v_b$ path in $G$, so assume that $V(P)$ does not contain vertex $\opp(u_3 u_4)$. The existence of $P$ implies the existence of a $B_1, B_2$ path $P_{bct}$ in $\bct{G}$ which does not include the vertex $\opp(u_3 u_4)$. Next, either $V(P'_{bct})$ contains $\opp(u_3 u_4)$ or $V(P'_{bct})$ does not contain $\opp(u_3 u_4)$. If $V(P'_{bct})$ contains $\opp(u_3 u_4)$, then since $\opp(u_3 u_4) \not \in V(P_bct)$ and $C_2 \not \in V(P'_bct)$, $\bct{G}$ has a $B_2, \opp(u_3 u_4)$ path which does not contain the edge $B_2 \opp(u_3 u_4)$. Since $B_2 \opp(u_3 u_4) \in E(\bct{G})$ however, the block tree $\bct{G}$ contains a cycle, which is a contradiction. If instead $V(P'_{bct})$ does not contain the vertex $\opp(u_3 u_4)$, then $\bct{G}$ contains a $B_2, B'$ path whose vertex set does not contain $\opp(u_3 u_4)$. By \autoref{lem:opp-comp} then, $e'$ cannot have been chosen in line \ref{line:18}, a contradiction.

We have shown that in each case, the assumption $c(u_1 u_2) = c(u_3 u_4)$ leads to a contradiction. We conclude that $c(u_1 u_2) \neq c(u_3 u_4)$, and thus that if $P$ is a shortest $v_a, v_b$ path in $G$ then $P$ must be rainbow with respect to the edge coloring $c$. Since vertices $v_a, v_b$ were chosen arbitrarily in $V(G)$, we have shown that the edge coloring $c$ strongly rainbow connects $G$.
\end{proof}

\begin{corollary}\label{cor:formula}
For any odd cactus $G$, formula \refp{eqn:main-formula} gives the value of the strong rainbow connection number $src(G)$ of $G$.
\end{corollary}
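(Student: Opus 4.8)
The plan is to obtain the corollary by assembling the three principal results of the paper and then carrying out a short counting argument. First, if $G$ is an odd cycle, then the formula is exactly the statement of Lemma~1 (due to \citet{chartrand2008}), so I would dispense with that case immediately and assume henceforth that $G$ is an odd cactus that is not a cycle. Let $(V_B,V_W,E_B,E_W)$ be the black white partition given by \refp{eqn:bw-all}. By \autoref{thm:bw} this is a valid black white partition, so \autoref{cor:bwbound} gives $|E_B|\le src(G)$; and by \autoref{thm:src} the coloring returned by \autoref{alg:1} is a $k$-coloring strongly rainbow connecting $G$ with $k=|E_B|$, whence $src(G)\le|E_B|$. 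Therefore $src(G)=|E_B|$, and it remains only to verify the arithmetic identity $|E_B|=\tfrac12\big(m+|\cutes|+|\mathcal{S}_1|-|\eant|\big)$.

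For the counting step, recall that $E(G)$ is partitioned by $\cutes$, $\eant$, and the edge sets of the cycle segments of $G$ (the observation following \autoref{obs:cut-boyes}), so $m=|\cutes|+|\eant|+\sum_{S}|E(S)|$, the sum ranging over all cycle segments. From \refp{eqn:bw-eb}, $|E_B|=|\cutes|+\sum_{S\in\mathcal{S}_1\cup\mathcal{S}_2}|E(S)|$, so the task reduces to relating $\sum_{S\in\mathcal{S}_3\cup\mathcal{S}_4}|E(S)|$ to $\sum_{S\in\mathcal{S}_1\cup\mathcal{S}_2}|E(S)|$. This is where \autoref{lem:opp-seg} and \autoref{obs:opp-seg} enter: the antipodal map $\opp(\cdot,C)$ is an involution on $V(C)\cup E(C)$ that interchanges vertices with edges, so it restricts to a bijection between the elements of a segment $S$ and those of its antipodal segment $S'$, giving $|E(S')|=|V(S)|$ and $|V(S')|=|E(S)|$. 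A segment in $\mathcal{S}_1$ has both boundary elements in $\cutvx{}$, hence begins and ends with an edge, so $|E(S)|=|V(S)|+1$ and thus $|E(S')|=|E(S)|-1$ for its unique partner $S'\in\mathcal{S}_4$; a segment in $\mathcal{S}_2$ begins with an edge and ends with a vertex, so $|E(S)|=|V(S)|$ and $|E(S')|=|E(S)|$ for its partner $S'\in\mathcal{S}_3$. Summing over the matched pairs gives $\sum_{S\in\mathcal{S}_1\cup\mathcal{S}_4}|E(S)|=2\sum_{S\in\mathcal{S}_1}|E(S)|-|\mathcal{S}_1|$ and $\sum_{S\in\mathcal{S}_2\cup\mathcal{S}_3}|E(S)|=2\sum_{S\in\mathcal{S}_2}|E(S)|$.

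Combining these, $\sum_{\text{all }S}|E(S)|=2\big(|E_B|-|\cutes|\big)-|\mathcal{S}_1|$, so $m=|\cutes|+|\eant|+2|E_B|-2|\cutes|-|\mathcal{S}_1|$, which rearranges to $2|E_B|=m+|\cutes|+|\mathcal{S}_1|-|\eant|$ and completes the argument. The only genuine subtlety — and the step I would be most careful about — is the bookkeeping of edge-versus-vertex counts for the four segment types: one must check that a segment's pair of boundary elements correctly forces whether the segment starts and ends with an edge or a vertex (e.g.\ $x_{j-1}\in\cutvx{}$ compels $x_j$ to be an edge), and that the involution $S\mapsto S'$ really is a perfect matching between $\mathcal{S}_1$ and $\mathcal{S}_4$ (resp.\ $\mathcal{S}_2$ and $\mathcal{S}_3$), so that every pair is counted exactly once and no segment is double-counted. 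Everything after that is elementary algebra.
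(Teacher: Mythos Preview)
Your proposal is correct and follows essentially the same route as the paper: dispose of the cycle case via \autoref{lem:cut-edges}'s predecessor (Chartrand), use \autoref{thm:bw}, \autoref{cor:bwbound} and \autoref{thm:src} to get $src(G)=|E_B|=|\cutes|+\sum_{S\in\mathcal{S}_1\cup\mathcal{S}_2}|E(S)|$, and then combine the edge-partition $m=|\cutes|+|\eant|+\sum_S|E(S)|$ with the pairing identity $\sum_{S\in\mathcal{S}_1\cup\mathcal{S}_2}|E(S)|=|\mathcal{S}_1|+\sum_{S\in\mathcal{S}_3\cup\mathcal{S}_4}|E(S)|$ from \autoref{lem:opp-seg}. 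Your extra justification of that identity via the vertex--edge swapping property of $\opp(\cdot,C)$ (giving $|E(S')|=|V(S)|$) is a nice explicit touch that the paper leaves implicit.
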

\begin{proof}
The formula for the case of $G$ being a(n odd) cycle was established by \citet{chartrand2008}. Now, suppose $G$ is not an odd cycle.
From the proof of \autoref{thm:src}, we have that the strong rainbow coloring produced by \autoref{alg:1} uses $k=|E_B|$ colors, where $E_B$ is given by \refp{eqn:bw-eb}. Moreover, $k=src(G)$. Hence, we have that $src(G)=|\cutes|+\sum_{S\in\mathcal{S}_1\cup\mathcal{S}_2}|E(S)|$.

Next, from \autoref{lem:opp-seg} we observe that, for any cycle segment $S\in\mathcal{S}_1$ and its opposite segment $S'\in\mathcal{S}_4$, $|E(S)|=|E(S')|+1$. Similarly, for any cycle segment $S\in\mathcal{S}_2$ and its opposite segment $S'\in\mathcal{S}_3$, $|E(S)|=|E(S')|$. Summing these identities over all the cycle segments in $G$, we have that 
\begin{equation}\label{eqn:cor-step}\tag{$\star$}
    |\mathcal{S}_1|+\sum_{S\in\mathcal{S}_3\cup\mathcal{S}_4}|E(S)|=\sum_{S\in\mathcal{S}_1\cup\mathcal{S}_2}|E(S)|\quad\Rightarrow\quad
    |\mathcal{S}_1|+\sum_{S\in\mathcal{S}}|E(S)|=2\sum_{S\in\mathcal{S}_1\cup\mathcal{S}_2}|E(S)|,
\end{equation}
where, for notational brevity, we define $\mathcal{S}:=\mathcal{S}_1\cup\mathcal{S}_2\cup\mathcal{S}_3\cup\mathcal{S}_4$.
Now, because $\eant$, $\cutes$ and the edge sets of the cycle segments in $G$ form a partition of $E(G)$, we have that 
\[
    m=|\eant|+|\cutes|+\sum_{S\in\mathcal{S}}|E(S)|
    =|\eant|+|\cutes|+2\sum_{S\in\mathcal{S}_1\cup\mathcal{S}_2}|E(S)|-|\mathcal{S}_1|,
\]
where the second equality follows from \refp{eqn:cor-step}. Rearranging this identity and plugging into the above formula for $src(G)$, we obtain
\[
    src(G)=|\cutes|+\sum_{S\in\mathcal{S}_1\cup\mathcal{S}_2}|E(S)|
    =|\cutes|+\tfrac{1}{2}\big(m+|\mathcal{S}_1|-|\eant|-|\cutes|\big)
    =\tfrac{1}{2}\big(m+|\cutes|+|\mathcal{S}_1|-|\eant|\big),
\]
as was to be shown.
\end{proof}

\begin{figure}
\centering
\begin{tikzpicture}[x=2cm,y=2cm,scale=0.8]

\usetikzlibrary{decorations.pathreplacing}

\tikzstyle{whitenode} = [draw,circle,minimum size=5pt,inner sep=0pt, thick]
\tikzstyle{blacknode} = [draw,circle,minimum size=5pt,inner sep=0pt, fill]
\tikzstyle{whiteedge} = [line width=0.8mm]
\tikzstyle{blackedge} = [thick]
\tikzstyle{twist}     = [label distance=-1cm, text depth=0ex, rotate=-90]


\pgfmathsetmacro{\xa}{0} \pgfmathsetmacro{\ya}{4}
\pgfmathsetmacro{\xb}{1} \pgfmathsetmacro{\yb}{4}
\pgfmathsetmacro{\xc}{2} \pgfmathsetmacro{\yc}{4}
\pgfmathsetmacro{\xd}{3} \pgfmathsetmacro{\yd}{3}
\pgfmathsetmacro{\xe}{2} \pgfmathsetmacro{\ye}{2}
\pgfmathsetmacro{\xf}{1} \pgfmathsetmacro{\yf}{2}
\pgfmathsetmacro{\xg}{0} \pgfmathsetmacro{\yg}{2}
\pgfmathsetmacro{\xh}{1} \pgfmathsetmacro{\yh}{1}
\pgfmathsetmacro{\xi}{4} \pgfmathsetmacro{\yi}{3}
\pgfmathsetmacro{\xj}{5} \pgfmathsetmacro{\yj}{3}
\pgfmathsetmacro{\xk}{6} \pgfmathsetmacro{\yk}{4}
\pgfmathsetmacro{\xl}{6} \pgfmathsetmacro{\yl}{2}


\node[whitenode,label=above left:$v_1$]     (a) at (\xa,\ya) {};
\node[whitenode,label=above:$v_2$]          (b) at (\xb,\yb) {};
\node[whitenode,label=above:$v_3$]          (c) at (\xc,\yc) {};
\node[blacknode,label=above right:$v_4$]    (d) at (\xd,\yd) {};
\node[whitenode,label=below right:$v_5$]    (e) at (\xe,\ye) {};
\node[blacknode,label=below right:$v_6$]    (f) at (\xf,\yf) {};
\node[whitenode,label=below left:$v_7$]     (g) at (\xg,\yg) {};
\node[whitenode,label=below:$v_8$]          (h) at (\xh,\yh) {};
\node[whitenode,label=above:$v_9$]          (i) at (\xi,\yi) {};
\node[blacknode,label=above left:$v_{10}$]  (j) at (\xj,\yj) {};
\node[whitenode,label=above right:$v_{11}$] (k) at (\xk,\yk) {};
\node[whitenode,label=below right:$v_{12}$] (l) at (\xl,\yl) {};

\draw[blackedge] (a) -- (b) node [midway, label=below:$e_1$] {};
\draw[whiteedge] (b) -- (c) node [midway, label=below:$e_2$] {};
\draw[blackedge] (c) -- (d) node [midway, xshift=5pt, yshift=5pt, label=below left:$e_3$] {};
\draw[blackedge] (d) -- (e) node [midway, xshift=5pt, yshift=-5pt, label=above left:$e_4$] {};
\draw[blackedge] (e) -- (f) node [midway, label=above:$e_5$] {};
\draw[blackedge] (f) -- (g) node [midway, label=above:$e_6$] {};
\draw[whiteedge] (g) -- (a) node [midway, xshift=-3pt, label=right:$e_7$] {};
\draw[blackedge] (f) -- (h) node [midway, xshift=3pt, yshift=-5pt, label=left:$e_8$] {};
\draw[blackedge] (d) -- (i) node [midway, label=below:$e_9$] {};
\draw[blackedge] (i) -- (j) node [midway, label=below:$e_{10}$] {};
\draw[blackedge] (j) -- (k) node [midway, xshift=5pt, yshift=-5pt, label=above left:$e_{11}$] {};
\draw[whiteedge] (k) -- (l) node [midway, label=right:$e_{12}$] {};
\draw[blackedge] (l) -- (j) node [midway, xshift=5pt, yshift=5pt, label=below left:$e_{13}$] {};




\node[] at (1.2,3) {$C_1$};
\node[] at (5.6,3) {$C_2$};

\node[] at (3,1) {$\mathcal{S}_1=\{e_4v_5e_5\}$};
\node[] at (3,0.5)   {$\mathcal{S}_4=\{v_1e_2v_2\}$};
\node[] at (5,1) {$\mathcal{S}_2=\{e_6v_7,e_{11}v_{11}\}$};
\node[] at (5,0.5)   {$\mathcal{S}_3=\{v_3e_3,v_{12}e_{13}\}$};

\node[] at (4,1.5) {$\eant=\{e_2,e_7,e_{12}\}$};

\end{tikzpicture}
\caption{Example graph to illustrate \autoref{alg:1}. Dark vertices are in $\cutvx{}$, and dark edges are in $\eant$. For this graph, $src(G)=7$ (cf.~formula \refp{eqn:main-formula}).}
\label{fig:example02}
\end{figure}

We illustrate our results with the example shown in \autoref{fig:example02}. For this odd cactus, we have $m=13$, $\cutes=\{e_8,e_9,e_{10}\}$, $\mathcal{S}_1=\{e_4v_5e_5\}$ and $\eant=\{e_2,e_7,e_{12}\}$, and thus, applying formula \refp{eqn:main-formula}, $src(G)=\tfrac{1}{2}(13+3+1-3)=7$. \autoref{alg:1} produces a strong rainbow coloring using $7$ colors in the following steps:
\begin{enumerate}
\item First, the for loop in line \ref{line:2} colors each edge in $\cutes$ a different color. Say, $c(e_8)=1$, $c(e_9)=2$ and $c(e_{10})=3$. At the termination of the for loop in line \ref{line:2}, $\mathrm{color}=3$.
\item Next, the for loop in line \ref{line:5} iterates over the segments in $\mathcal{S}_1$---in this example, the single segment $e_4v_4e_5$ contained in cycle $C_1$. The inner for loop at line \ref{line:6} first colors $c(e_4)=4$, and the condition in line \ref{line:9} is satisfied, so that $c(e_1)=4$ as well. The next iteration of the inner loop colors $c(e_5)=5$, and the conditional at line \ref{line:9} is skipped. At the termination of the for loop in line \ref{line:5}, $\mathrm{color}=5$. Note that, at the termination of the for loop at line \ref{line:5}, all of the edges contained in both $\mathcal{S}_1$ cycle segments and $\mathcal{S}_4$ segments have been colored.
\item Next, the for loop in \ref{line:11} iterates over the segments in $\mathcal{S}_2$. Suppose that the algorithm first considers segment $e_6v_7$ in cycle $C_1$. Then, per lines \ref{line:14} and \ref{line:15}, the algorithm sets $c(e_6)=c(e_3)=6$. Next, the algorithm considers segment $e_{11}v_{11}$ in cycle $C_2$, and colors $c(e_{11})=c(e_{13})=7$. At the termination of the for loop in line \ref{line:11}, the counter $\mathrm{color}=7$, equal to $src(G)$. Note that, at the termination of the for loop at line \ref{line:11}, all of the edges contained in both $\mathcal{S}_2$ and $\mathcal{S}_3$ segments have been colored.
\item Finally, the edges in $\eant$ are colored in the for loop at line \ref{line:16} (all other edges have been colored). The edge $e_2$ may only be colored $c(e_2)=1$, the same color as edge $e_8$ (because $e_8$ is the only edge satisfying the conditions in line \ref{line:18}). The color for edge $e_7$ may be any of $\{c(e_9),c(e_{10}),c(e_{11})\}$, and the color for edge $e_{12}$ may be any of $\{c(e_4),c(e_5),c(e_6),c(e_9),c(e_{10})\}$.
\end{enumerate}


\section{Conclusion and Future Work}

In this paper we present both a formula \refp{eqn:main-formula} and a polynomial time complexity algorithm (\autoref{alg:1}) for computing the strong rainbow connection numbers of a wide class of cactus graphs, those that do not contain even length cycles. To achieve this, we first introduce the notion of black white partitions defined for odd cactus graphs, which partition the vertices and edges of an odd cactus. One of the edge sets in this partition scheme has the property that, for each pair of edges in the set, there exists a pair of vertices such that the unique shortest path between them traverses both edges. Since any strong rainbow edge coloring of the graph must then map these edges to distinct colors, this partitioning scheme provides a lower bound for the strong rainbow connection number of the graph (as shown by \autoref{thm:bwcoloring}). We then show that the structure of the cut vertices and blocks in odd cacti can be used to directly construct a special black white partition (as shown by \autoref{thm:bw}). In particular, the strong rainbow coloring constructed by \autoref{alg:1} (shown to be correct by \autoref{thm:src}) achieves the bound that this special black white partition presents. Thus, \autoref{alg:1} produces an optimal strong rainbow coloring. Moreover, the number of colors used in the edge coloring produced by \autoref{alg:1} is directly given by \refp{eqn:main-formula}.

Despite fairly wide interest in both rainbow connection and strong rainbow connection numbers of graphs, relatively few polynomial algorithms have been presented for computing strong rainbow connection numbers. In this paper, we show that the strong rainbow connection numbers of many cacti can be computed in worst case polynomial time complexity. The complexity of computing the strong rainbow connection numbers in general cacti however remains an open question. The addition of even length cycles adds a significant degree of difficulty to the problem in the case of cacti. While shortest paths between vertices in odd cacti are unique, the number of distinct shortest paths between vertices in general cacti may be asymptotically exponential. Nevertheless, we conclude our study with the following conjecture.

\begin{conjecture}
There exists an algorithm which computes $src(G)$ for any cactus graph $G$ with worst case polynomial time complexity. 
\end{conjecture}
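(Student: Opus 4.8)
The plan is to extend the black--white partition framework developed here from odd cacti to arbitrary cacti, the one genuinely new ingredient being a theory of even cycles. The essential phenomenon is that in an even cycle a pair of antipodal vertices is joined by \emph{two} shortest paths, so a strong rainbow coloring need only make one of them rainbow; this simultaneously weakens the rigidity that forces black edges to receive distinct colors (\autoref{thm:bwcoloring}) and grants a coloring algorithm the freedom to route around the cycle. Concretely, in an even cycle of length $\ell$ one may legally give each pair of opposite edges a common color, so that such a cycle contributes only $\ell/2$ colors in isolation (consistent with the known value $src(C_\ell)=\ell/2$); the involution $\opp$ becomes edge-to-edge and vertex-to-vertex, and the set $\eant$ of edges eligible for a recycled color must be redefined around this involution.

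For the lower bound I would first give a generalized notion of a black--white partition of an arbitrary cactus under which the key property of \autoref{thm:bwcoloring}---that some shortest path traverses any prescribed pair of black edges---still holds, and prove the corresponding analogue of \autoref{cor:bwbound}. Inside an even cycle this demands a fresh local analysis: along either half of the cycle the edges should be forced to distinct colors, while opposite edges may agree, so even-cycle ``segments'' ought to be defined modulo the opposite-edge involution. The target is an analogue of \autoref{thm:bw} that builds, from the cut-vertex and block structure alone, a canonical partition whose black-edge count is a valid lower bound and which obeys a formula generalizing \refp{eqn:main-formula} by an extra term in which each even cycle of length $\ell$ adds close to $\ell/2$, adjusted for the cut vertices that cycle carries.

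For the upper bound I would process $\bct{G}$ in the spirit of \autoref{alg:1}, coloring bridge and odd-cycle blocks exactly as before and handling each even-cycle block by giving opposite edges a common color and then, as with $\eant$ edges in \autoref{alg:1}, recycling colors onto the ``surplus'' half from the appropriate separated subgraph. The correctness proof, the analogue of \autoref{thm:src}, is where the effort concentrates: one must show that \emph{for every} pair $a,b$ \emph{some} shortest $a,b$ path is rainbow, and because a cactus with even cycles may have exponentially many shortest $a,b$ paths this cannot be verified path by path. Instead I would argue inductively along $\bct{G}$: a shortest $a,b$ path breaks into pieces lying in consecutive blocks, and at each even-cycle block one selects the traversal direction that avoids whatever colors the path has already used; since each even cycle spends at most $\ell/2$ colors and the two directions differ exactly in which opposite-edge color classes they meet, a counting argument should guarantee a safe choice always exists. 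Optimality then follows by checking that the constructed coloring meets the generalized lower bound, which also yields the formula, exactly as \autoref{cor:formula} follows here.

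The main obstacle I expect is the interaction of an even cycle with the rest of the graph at a cut vertex lying in an antipodal-type position. In an odd cactus each edge has a single antipodal \emph{vertex}, which is what makes Lemmas~\ref{lem:l0}--\ref{lem:opp-comp} and the resulting recursion so clean; in an even cactus the opposite of an edge is again an edge, an even cycle can carry several cut vertices whose vertex-antipodes may or may not themselves be cut vertices, and the recycling of surplus colors must be made simultaneously consistent across all of them. Showing that the resulting local lower bound is globally tight---that no coloring can do better by coordinating choices across many even cycles---may require replacing the purely local black--white partition by a more global potential or matching argument. Finally, I anticipate a separate, complexity-theoretic difficulty: because naive verification of rainbow-connectivity is infeasible here, the algorithm, like \autoref{alg:1}, must be correct \emph{by construction}, which forces all of the structural lemmas above to be proved in full generality rather than merely checked.
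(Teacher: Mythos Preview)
The statement you are addressing is a \emph{conjecture}, not a theorem: the paper explicitly leaves it open (``The complexity of computing the strong rainbow connection numbers in general cacti however remains an open question'') and offers no proof or proof sketch. There is therefore nothing in the paper to compare your attempt against.

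Your submission is not a proof but a research plan. You outline an extension of the black--white partition machinery to even cycles, propose an analogue of \autoref{alg:1}, and candidly flag the points where the argument is incomplete: the redefinition of $\eant$ and of segments under the edge-to-edge involution on even cycles, the generalized lower bound, the inductive rainbow-path selection along $\bct{G}$, and the global tightness of the bound when several even cycles interact through shared cut vertices. Each of these is stated as something you \emph{would} do, not something you have done; none of the key lemmas (the analogues of \autoref{thm:bwcoloring}, \autoref{thm:bw}, Lemmas~\ref{lem:l0}--\ref{lem:opp-comp}, and \autoref{thm:src}) is actually proved. In particular, the step you yourself identify as the crux---showing that the local lower bound remains globally tight when colors can be coordinated across many even cycles---is exactly where a naive extension of the odd-cactus argument could fail, and you give no mechanism (matching, LP duality, potential function) that would close it. As written, this is a reasonable outline of an approach to the conjecture, but it does not constitute a proof, and the paper provides none either.
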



\section*{Acknowledgements}
This work was supported by National Science Foundation grant number DMS-1720225. Additionally, D. Mildebrath was supported by the United States Department of Defense through the National Defense Science and Engineering Graduate Fellowship program.

\bibliography{bib}

\end{document}